\numberwithin{equation}{section}
\title[MULTIPLICATIVE DIOPHANTINE EXPONENTS OF HYPERPLANES]{MULTIPLICATIVE DIOPHANTINE EXPONENTS OF HYPERPLANES AND THEIR NONDEGENERATE SUBMANIFOLDS}
\author{Yuqing Zhang}
\address{Erwin Schroedinger Institute,
Boltzmanngasse 9,
A-1090 Vienna, Austria}
\email{\url{zhangy6@univie.ac.at}}
\newcommand{\R}{{\mathbb{R}}}
\newcommand{\Z}{{\mathbb{Z}}}
\newcommand{\N}{{\mathbb{N}}}
\newcommand{\q}{{\bf{q}}}
\newcommand{\GL}{\operatorname{GL}}
\newcommand{\SL}{\operatorname{SL}}
\newcommand{\diag}{{\rm diag}}
\newcommand{\ou}{{\omega(\y)}}
\newcommand{\ot}{{\omega^\times(\y)}}
\newcommand{\LL}{{\mathcal L}}
\newcommand{\x}{{\bf x}}
\newcommand{\z}{{\bf z}}
\newcommand{\w}{{\bf w}}
\newcommand{\p}{{\bf p}}
\newcommand{\y}{{\bf y}}
\newcommand{\f}{{\mathbf{f}}}
\newcommand{\MM}{{\mathcal{M}}}
\newtheorem{theorem}{Theorem}[section]
\newtheorem{lem}[theorem]{Lemma}
\newtheorem{prop}[theorem]{Proposition}
\newtheorem{condition}[theorem]{Condition}
\newtheorem{remark}[theorem]{Remark}
\thanks{The author is supported by
Austrian Science Fund (FWF) Grant NFN S9613.}
\begin{document}

\begin{abstract}

We consider multiparameter dynamics on
the space of unimolular lattices. Along with quantitative
nondivergence we prove that multiplicative Diophantine exponents of hyperplanes are inherited  by their nondegenerate
submanifolds.

\end{abstract}

\maketitle

\section{Introduction}

Given any $\y=(y_1,\ldots,y_n) \in \R^n$ , we define its Diophantine
exponent as
\begin{equation}\label{eq: omega}
\omega(\y)=\sup \left\{v\mid \exists \;\infty
\text{ many } \q \in \Z^n  \text{ with } |\langle\q,\y\rangle+p|<\|\q\|^{-v}
\text{ for some } p \in \Z\right\},
\end{equation}
where $\langle\q,\y\rangle$ stands for the inner product of vectors $\q$ and $\y$.

\begin{remark}
In \eqref{eq: omega}, $\|\cdot\|$ can be any norm on $\R^n$. Same in \eqref{eq: sigma}.
\end{remark}

It can be deduced from Dirichlet's Theorem \cite{Cassels} that
\begin{equation}\label{eq: minkowski}
\ou\geq n \quad \forall \y \in \R^n.
\end{equation}
We call $\y$  very well approximable (abbreviated as VWA) if  $\ou>n$. It is known that the set of VWA vectors has zero Lebesgue measure.
Following \cite{exponent}  the Diophantine exponent $\omega(\mu)$ of a Borel measure $\mu$ is set to be the $\mu$-essential supremum of the $\omega$ function, that is,
\begin{equation}
\label{eq: deu1} \omega(\mu)=\sup \left\{v\mid
\mu\{\y\mid \omega(\y)>v\}>0 \right\}.
\end{equation}
Let  $\MM$ be a smooth submanifold of $\R^n$ and $\mu$ be the measure class of the Riemannian volume on $\MM$. More precisely put, let $\mu$ be
the pushforward $\mathbf{f}_*\lambda$ of $\lambda$(the Lebesgue measure) by any smooth map $\mathbf{f}$ parameterizing $\MM$.  Then the Diophantine exponent of $\MM$, which we denote by $\omega(\MM)$, is set to be equal to $\omega(\mu)$.
$\MM$ is called extremal if  $\omega(\MM)=n$, that is, almost all points of $\MM$ are  not VWA.
A trivial example of an extremal
submanifold of $\R^n$ is $\R^n$ itself.

 K. Mahler \cite{M} conjectured in 1932 that
\begin{equation}\label{eq: curve}
 \MM=\left\{\left.(x,x^2,\ldots,x^n)\right|x \in \R\right\}
\end{equation}
 is an extremal submanifold. This was proved by
Sprind\u{z}uk \cite{Sp1} in 1964. The curve  \eqref{eq: curve} has a notable property that it does not lie in
 any proper affine subspace of $\R^n$. We might describe and formalize this property in terms of nondegeneracy condition as follows.
 Let $\mathbf{f}=(f_1,\ldots,f_n): U\rightarrow \R^n$ be a differentiable map where $U$
is an open subset of $\R^d$. $\mathbf{f}$ is called nondegenerate in
an affine subspace $\LL$ of $\R^n$ at $\x \in U$ if $\mathbf{f}(U)
\subset \LL$ and the span of all the partial derivatives of
$\mathbf{f}$ at $\x$ up to some order coincides with the linear part
of $\LL$. If $\MM$ is a $d$ dimensional submanifold  of $\LL$ we will say
that $\MM$ is nondegenerate in $\LL$ at $\y \in \MM$ if some diffeomorphism
of $\mathbf{f}$ between an open subset $U$ of $\R^d$ and a
neighborhood of $\y$ in $\MM$ is nondegenerate in $\LL$ at
$\mathbf{f}^{-1}(\y)$. We will say $\MM$ is nondegenerate in $\LL$ if it
is nondegenerate in $\LL$ at almost all points of $\MM$.

   It was conjectured by Sprind\u{z}uk \cite{Sp2}
in 1980 that almost all points on a nondegenerate analytic submanifold of $\R^n$ are not very well approximable.
In 1998 D. Kleinbock and  G.A. Margulis   proved in \cite{KM}

\begin{theorem}
Let $\MM$ be a smooth nondegenerate submanifold of $\R^n$, then $\MM$ is extremal.
\end{theorem}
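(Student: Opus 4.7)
The plan is to exploit the Dani--Kleinbock--Margulis correspondence between Diophantine approximation and dynamics on the space of unimodular lattices, coupled with a quantitative nondivergence estimate on that space. Setting $g_t \df \diag(e^{nt}, e^{-t}, \ldots, e^{-t}) \in \SL_{n+1}(\R)$ and letting $u_{\y}$ denote the unipotent matrix with $\y$ encoded in its first row (identity otherwise), one first establishes that $\y$ is VWA if and only if there exist $\gamma > 0$ and an unbounded sequence $t_k \to \infty$ for which the lattice $g_{t_k} u_{\y} \Z^{n+1}$ contains a nonzero vector of length less than $e^{-\gamma t_k}$. This translation is obtained by a Minkowski-style comparison between the inequality defining $\ou$ and the action of $g_t$ on the vector $(\langle \q, \y\rangle + p, q_1, \ldots, q_n)$.

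Given this correspondence, it suffices to prove that for any smooth $\ff : U \to \R^n$ parameterizing $\MM$ near a nondegeneracy point $\x_0$, any compact $K \subset U$ containing a neighborhood of $\x_0$, and every $\gamma > 0$,
\begin{equation*}
  \sum_{t \in \N} \lambda\bigl\{\x \in K : \delta(g_t u_{\ff(\x)} \Z^{n+1}) < e^{-\gamma t}\bigr\} < \infty,
\end{equation*}
where $\delta(\Lambda)$ denotes the length of a shortest nonzero vector of $\Lambda$ and $\lambda$ is Lebesgue measure on $\R^d$. Once this convergence is established, the Borel--Cantelli lemma combined with the fact that nondegeneracy holds at almost every point of $\MM$ forces $\omega(\ff_*\lambda) = n$, which in view of \eqref{eq: minkowski} yields extremality.

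The engine driving the estimate is a quantitative nondivergence theorem on $\SL_{n+1}(\R)/\SL_{n+1}(\Z)$. Using Mahler's compactness criterion, one rewrites $\delta(g_t u_{\ff(\x)} \Z^{n+1})$ as a minimum, over primitive vectors $\ww$ in the exterior powers $\bigwedge^j \Z^{n+1}$ for $1 \le j \le n$, of norms $\|g_t u_{\ff(\x)} \ww\|$ computed in the corresponding exterior representation. The crucial analytic input is then that on a sufficiently small ball $B = B(\x_0, r) \subset U$, each such coordinate function, viewed as a function of $\x$, is $(C,\alpha)$-good on a dilated ball with constants depending only on $\ff$ and $K$. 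Granted this, quantitative nondivergence yields
\begin{equation*}
  \lambda\bigl\{\x \in B : \delta(g_t u_{\ff(\x)} \Z^{n+1}) < \vre\bigr\} \ll (\vre/\rho)^\alpha \lambda(B)
\end{equation*}
for a suitable $\rho > 0$ independent of $t$, and plugging in $\vre = e^{-\gamma t}$ produces a geometric series, completing the Borel--Cantelli step.

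The principal technical difficulty lies in verifying the $(C,\alpha)$-good property, which is precisely where the nondegeneracy hypothesis is consumed. Because the partial derivatives of $\ff$ at $\x_0$ up to some order $\ell$ span $\R^n$, a Taylor expansion shows that the entries of $g_t u_{\ff(\cdot)}\ww$, regarded as functions on $B$, can be uniformly approximated by polynomials of degree at most $\ell$ whose relevant $C^\ell$-norms stay bounded below on $B$. Since polynomials of bounded degree are $(C,\alpha)$-good with universal constants, and since the class of $(C,\alpha)$-good functions is stable under pointwise maxima and small perturbations (after adjusting constants), the required property follows and the proof is complete.
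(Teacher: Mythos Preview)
The paper does not actually prove this theorem: it is quoted as the 1998 result of Kleinbock and Margulis \cite{KM}, used only as background for the paper's own (multiplicative, hyperplane) results. So there is no ``paper's own proof'' to compare against; your outline is, in spirit, a sketch of the argument in \cite{KM}, and the machinery the present paper develops in \S2--\S4 (Lemma~\ref{lem: equiva1}, Proposition~\ref{prop: proposition}, Lemma~\ref{lem: thm2.2}) is a multiparameter refinement of that same strategy.

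That said, your sketch has two imprecisions worth flagging. First, Mahler's compactness criterion does not ``rewrite $\delta(\Lambda)$ as a minimum over exterior powers''; $\delta$ is already the $j=1$ quantity. The exterior powers enter because the quantitative nondivergence theorem (Lemma~\ref{lem: thm2.2} here, or \cite[Theorem~5.2]{KM}) has \emph{hypotheses} indexed by all subgroups $\Gamma\subset\Z^{n+1}$, i.e.\ by all ranks $j$, while its \emph{conclusion} is about $\delta$. Second, and more substantively, you assert the existence of ``a suitable $\rho>0$ independent of $t$'' without justification. This is condition~(ii) of Lemma~\ref{lem: thm2.2}: one must show $\sup_{\x\in B}\|g_t u_{\ff(\x)}\Gamma\|\ge\rho^{rk(\Gamma)}$ for every $\Gamma$. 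In \cite{KM} this is a genuine computation (their Proposition~3.4) which uses the \emph{nonplanarity} of $\ff$ --- the linear independence of $1,f_1,\dots,f_n$ on every small ball --- and is logically separate from the $(C,\alpha)$-good property. Nondegeneracy is consumed twice, not once: once for $(C,\alpha)$-goodness (your final paragraph) and once for the covolume lower bound. Your sketch collapses these, leaving the $\rho$-condition unverified.
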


\cite{extremal} studied the conditions under which an affine subspace $\LL$ of $\R^n$ is  extremal and
showed that $\LL$ is extremal if and only if its nondegenerate submanifolds are extremal.
\cite{exponent} derived formulas for  computing $\omega(\LL)$ and $\omega(\MM)$ when $\LL$ is not extremal and $\MM$ is an arbitrary nondegenerate
submanifold in it. This breakthrough was achieved through sharpening of some nondivergence estimates in the space of unimodular lattices (see Lemma
 \ref{lem:  thm2.2}  for review). We record \cite[Theorem 0.3]{exponent} as follows:

\begin{theorem}\label{thm: exponent}
If $\LL$ is an affine subspace of  $\R^n$ and $\MM$ is a nondegenerate
submanifold in $\LL$, then
\begin{equation}
\omega(\MM)=\omega(\LL)=\inf  \left\{\omega(\x)\mid\x \in \LL\right\}=\inf \left\{\omega(\x)\mid\x \in \MM\right\}.
\end{equation}
\end{theorem}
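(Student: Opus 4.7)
The plan is to reduce the four-way equality to a single non-trivial inequality, namely
\begin{equation}\label{eq: plan star}
\omega(\MM) \leq \inf\{\omega(\x) : \x \in \LL\}.
\end{equation}
Three of the six inequalities implicit in the theorem are automatic. Since $\MM \subset \LL$, the infimum over $\LL$ does not exceed the infimum over $\MM$. Also, for any Borel measure $\nu$ on $\R^n$ the set $\{\y : \omega(\y) > \omega(\nu)\}$ is $\nu$-null by definition of essential supremum; picking any point of $\MM$ (resp.\ $\LL$) outside this null set gives $\inf\{\omega(\x) : \x \in \MM\} \leq \omega(\MM)$ and $\inf\{\omega(\x) : \x \in \LL\} \leq \omega(\LL)$. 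Specialising \eqref{eq: plan star} to $\MM = \LL$ (which is trivially nondegenerate in itself) reverses the latter, forcing $\omega(\LL) = \inf_{\x \in \LL} \omega(\x)$; then \eqref{eq: plan star} for the actual $\MM$ sandwiches all four quantities together.

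To prove \eqref{eq: plan star} I would use the standard dynamical dictionary. For $\y \in \R^n$ let $u_\y$ denote the $(n+1) \times (n+1)$ unipotent matrix with top row $(1, \y)$, and let $g_\T$ denote a diagonal element with one expanding and $n$ contracting entries indexed by $\T = (t_1, \dots, t_n)$. A correspondence going back to Dani recasts $\omega(\y) > v$ as the statement that $g_\T u_\y \Z^{n+1}$ contains vectors of length below a prescribed threshold for arbitrarily large $\|\T\|$. Fixing $\y_0 \in \LL$ with $\omega(\y_0)$ close to $\inf_{\x \in \LL} \omega(\x)$, the task reduces to showing that for $\mu$-a.e.\ $\x \in \MM$, the trajectory $\T \mapsto g_\T u_{\f(\x)} \Z^{n+1}$ escapes to infinity no faster than the trajectory based at $\y_0$.

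The key tool is the sharpened quantitative nondivergence estimate in Lemma \ref{lem:  thm2.2}. For nondegenerate $\f$, the coordinate functions of $g_\T u_{\f(\x)} \vv$, as $\vv$ ranges over primitive vectors in the exterior powers of $\Z^{n+1}$ and $\x$ varies in a neighbourhood $U$, form a $(C, \alpha)$-good family whose constants depend only on the affine hull $\LL$. The crucial observation is that any polynomial of bounded degree vanishing on $\f(U)$ must, by nondegeneracy, vanish on all of $\LL$; hence any rational subspace that remains uniformly short along $\MM$ is already witnessed at $\y_0 \in \LL$ and therefore contributes at most $\omega(\y_0)$ to the escape rate. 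Feeding this into Lemma \ref{lem:  thm2.2} at thresholds calibrated to the exponent $\omega(\y_0) + \vre$, and combining with a Borel--Cantelli argument over a dyadic decomposition of the $\T$-parameter, shows that the set of $\x$ with $\omega(\f(\x)) > \omega(\y_0) + \vre$ is $\mu$-null; letting $\vre \downarrow 0$ and $\y_0$ approach the infimum yields \eqref{eq: plan star}.

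The main obstacle is the \emph{inheritance} of good constants from $\LL$ to $\MM$: one must verify that the $(C, \alpha)$ bounds required by Lemma \ref{lem:  thm2.2} depend only on the affine span of $\f(U)$ and not on the particular embedding of the submanifold. This demands a careful choice of basis in the exterior powers of $\R^{n+1}$ and a precise comparison of the heights of primitive sublattices under the diagonal flow, so that exceptional escape behaviour on $\MM$ is genuinely explained by exceptional behaviour of a rational subspace already visible on $\LL$, rather than by a submanifold-specific cancellation. Once this inheritance is in hand, the remaining steps are a now-routine application of the nondivergence machinery.
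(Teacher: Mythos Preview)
The paper does not prove Theorem~\ref{thm: exponent}; it is quoted verbatim from \cite[Theorem~0.3]{exponent} as background, and no argument for it appears anywhere in the present text. There is therefore nothing to compare your proposal against directly.

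That said, your sketch is recognisably the strategy of \cite{exponent}, and it is also the template the present paper follows for the multiplicative analogue (Theorem~\ref{thm: mainthm} via Theorem~\ref{thm: anothermain}). Two points of drift are worth noting. First, the reduction in your opening paragraph is clean and correct. Second, the phrase ``fixing $\y_0\in\LL$ with $\omega(\y_0)$ close to the infimum'' and then comparing trajectories to the one based at $\y_0$ is not how the argument actually runs. In \cite{exponent} (and in \S4 here for $\omega^\times$) one shows that the hypotheses of the nondivergence lemma depend only on the affine span of $\f(B\cap\mathrm{supp}\,\mu)$, encoded in a matrix $R$ as in \eqref{R}; nonplanarity in $\LL$ forces this span to be $\LL$ itself, so the relevant Condition (the non-multiplicative analogue of Condition~\ref{condition: equi0}) is a property of $\LL$, not of any chosen basepoint. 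The contrapositive step is then: if the Condition fails for some $v$, \emph{every} point of $\f(B\cap\mathrm{supp}\,\mu)$ lies in $W_u$ for some $u>v$ (compare Lemma~\ref{lem: negation2}), which already contradicts the existence of a single $\y_0\in\LL$ with $\omega(\y_0)\le v$. So the logic is ``failure of the lattice condition $\Rightarrow$ no good point exists in $\LL$'', rather than a trajectory-by-trajectory comparison with a fixed $\y_0$. Your ``main obstacle'' paragraph gestures at this inheritance, but the mechanism is the matrix $R$ determined by $\LL$, not a basis choice in exterior powers tailored to a particular point.
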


In this paper we will be dealing with multiplicative version of the above concepts. We define
\begin{equation} \label{distance}
 \Pi_+(\y)\stackrel{\mathbf{def}}{=}\prod_{i=1}^{n}|y_i|_+, \quad \text{where}\quad |y_i|_+=\max\left(1,|y_i|\right),
\end{equation}
\begin{equation}
\label{omegatimes} \omega^\times(\y)=\sup \left\{v\left|\text{ }
\exists \infty \text{ many } \q \in \Z^n \text{ with } |
\langle\q,\y\rangle+p|<{\Pi_+(\q)}^{-v/n} \right.\text{ for some } p \in \Z\right\}.
\end{equation}
In the spirit of \eqref{eq: deu1} we define multiplicative Diophantine exponents of manifolds and measures as
 \begin{equation}
 \label{eq: deusigma}
 \omega^\times(\MM)=\omega^\times(\mu)\stackrel{\mathbf{def}}{=}\sup \left\{v\mid \mu\{\y\mid\omega^\times(\y)>v\right\}>0 \},
 \end{equation}
where $\mu$ is the measure class of Riemannian volume on $\MM$.

From definitions we derive $\omega^\times(\y)\geq \omega(\y)$. We call $\y$  very well multiplicatively
approximable (VWMA) if $\omega^\times(\y)>n$. It can be proved that the set of VWMA vectors has zero Lebesgue
measure. Following the terminology of \cite{Sp2}, we call $\MM$ strongly extremal if almost all $\y \in \MM$ are not VWMA.
Strong extremality implies extremality, and to prove a manifold to be strongly extremal is often more difficult to
prove it to be just extremal.

 A. Baker conjectured that the curve \eqref{eq: curve} is strongly extremal \cite{B}
in 1975. Proof of this conjecture was based on dynamical approach proposed in \cite{KM}.
\cite{KM} also proved that nondegenerate manifolds of $\R^n$ are strongly extremal.
In \cite{extremal}  D. Kleinbock gave a necessary and sufficient condition for an arbitrary affine subspace
 to be strongly extremal and  showed that strong extremality of an affine space is  inherited by its nondegenerate submanifolds.
\cite{extremal} also showed that a subspace is strongly extremal iff it contains at least one not VWMA vector.
\cite{survey} gave a detailed account of historical and recent development in the study of multiplicative Diophantine
approximation, and in particular the renowned Littlewood's conjecture \cite[\S 5]{survey}.

This paper will calculate multiplicative
Diophantine exponents of  hyperplanes and their nondegenerate
submanifolds.  We follow  the strategy of associating
 Diophantine property of vectors with behavior of certain trajectories in the space of lattices \cite{KM, extremal}. In this process we will be
 considering multiparameter actions as opposed to one parameter ones which work well for standard Diophantine approximation problems.
Combined with dynamics we use nondivergence estimates in its strengthened format \cite{exponent} (see  Lemma \ref{lem:  thm2.2} of \S3)
to  prove the following:

\begin{theorem}\label{thm: mainthm}
If $\LL$ is a hyperplane of  $\R^n$ and $\MM$ is a nondegenerate
submanifold in $\LL$, then
\begin{equation}
\omega^\times(\LL)=\omega^\times(\MM)=\inf  \left\{\omega^\times(\x)\mid\x \in \LL\right\}=\inf \left\{\omega^\times(\x)\mid\x \in \MM\right\}.
\end{equation}
\end{theorem}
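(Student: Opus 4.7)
The plan is to establish the string of equalities by handling the easy direction from definitions and the hard direction via multi-parameter dynamics on the space of unimodular lattices, in the spirit of \cite{KM, extremal, exponent}.

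For the easy direction, I would first note that $\MM \subset \LL$ gives $\inf_{\x \in \LL}\omega^\times(\x) \leq \inf_{\x \in \MM}\omega^\times(\x)$. Moreover, if $\omega^\times(\y) \geq c$ for every $\y$ in the support of $\mu$, then $\{\omega^\times > c - \vre\}$ has full $\mu$-measure for each $\vre > 0$, so $\omega^\times(\mu) \geq c$; applying this with $c = \inf_\MM \omega^\times$ and $c = \inf_\LL \omega^\times$ yields $\omega^\times(\MM) \geq \inf_\MM \omega^\times$ and $\omega^\times(\LL) \geq \inf_\LL \omega^\times$, establishing the easy half of the chain.

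The heart of the argument is to show $\omega^\times(\MM) \leq \omega^\times(\x_0)$ for every $\x_0 \in \LL$; specializing to $\MM = \LL$ (which is trivially nondegenerate in itself) also gives $\omega^\times(\LL) \leq \inf_\LL \omega^\times$, closing the chain. I plan to translate the problem via the multi-parameter Dani correspondence: $\omega^\times(\y) > v$ should be equivalent to the existence of infinitely many $\T = (t_1,\ldots,t_n) \in \R_{\geq 0}^n$ for which the lattice $g_\T u_\y \Z^{n+1}$ admits a short nonzero vector, where $g_\T = \diag(e^{t_1 + \cdots + t_n}, e^{-t_1},\ldots,e^{-t_n})$ and $u_\y$ is the standard unipotent encoding of $\y$. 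For any $v > \omega^\times(\x_0)$ the goal will be to show $\mu_\MM\{\y : \omega^\times(\y) > v\} = 0$. Applying Lemma \ref{lem:  thm2.2} (the sharpened quantitative nondivergence estimate) to the nondegenerate parametrization $\f : U \to \MM \subset \LL$, the relevant wedge-product norms should form $(C,\alpha)$-good families of functions in $\x$, and the hyperplane constraint forces any integer $\q$ producing a near-resonance along $\MM$ to be nearly parallel to the normal of $\LL$. This is the step that converts the measure estimate into a comparison with the single trajectory $\{g_\T u_{\x_0} \Z^{n+1}\}$, whose scale of resonances is controlled by $\omega^\times(\x_0) < v$; a dyadic summation over $\T$ combined with Borel--Cantelli should then finish the argument.

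The principal obstacle I anticipate is the multi-parameter nature of the flow. Unlike the single-parameter setting of \cite{exponent}, the indexing $\T$ ranges over the $n$-dimensional positive cone, so one must partition the cone into dyadic boxes with $(C,\alpha)$-goodness constants controlled uniformly across the partition, and carefully choose the subspaces of $\R^{n+1}$ to which Lemma \ref{lem:  thm2.2} is applied so that the supremum-over-small-balls quantity appearing in that estimate is actually bounded by trajectory data at $\x_0$. Restricting to hyperplanes (rather than arbitrary affine subspaces) should simplify the combinatorics crucially: the single linear equation defining $\LL$ yields a single effective constraint on admissible $\q$, making the comparison with $\x_0$ tractable in the multiplicative regime; for lower-codimension subspaces the corresponding bookkeeping would be substantially heavier, which is presumably why the present paper stops at the hyperplane case.
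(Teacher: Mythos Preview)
Your overall framework matches the paper's: translate to multi-parameter dynamics on $\Omega_{n+1}$, apply the sharpened quantitative nondivergence (Lemma~\ref{lem:  thm2.2}) to a nondegenerate parametrization, and close via Borel--Cantelli. The logic you sketch for the hard direction---if some $\x_0 \in \LL$ has $\omega^\times(\x_0) \leq v$ then the hypotheses of the nondivergence lemma hold over balls in $\MM$---is precisely the contrapositive of the paper's Lemma~\ref{lem: negation2}, and the reduction to rank-one subgroups via the hyperplane structure (your ``single effective constraint on admissible $\q$'') is indeed the relevant simplification, carried out in the paper via \eqref{eq: simple}.

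However, you have misidentified the principal obstacle. Uniformity of $(C,\alpha)$-goodness across the $\T$-cone is automatic, since goodness is scale-invariant and $g_\T$ merely rescales coordinates; a dyadic partition of the cone is not the issue. The genuine difficulty, and the paper's new idea, is a \emph{coupling} between a stratification of the integer lattice and a restriction on $\T$: one decomposes $\Z^{n+1} = \bigcup_k \Z^{n+1}_k$ by the number $k$ of nonzero entries of $\q$, introduces the $k$-dependent threshold $c_k = \tfrac{v-n}{kv+n}$, and restricts attention to $\T$ with $t_i \geq c_k t$ for at least $k$ indices (see~\eqref{eq: restriction}). This restriction is what makes the contrapositive (Lemma~\ref{lem: negation2}) go through: when the nondivergence hypothesis fails via a vector with $m$ nonzero entries and $m < k$, the restriction guarantees $t_i - d_k t \geq 0$ on enough coordinates that the elementary implication $|p| \leq e^\alpha \Rightarrow |p|_+ \leq e^\alpha$ for $\alpha \geq 0$ (Lemma~\ref{lem: elementary}) applies, giving $\Pi_+(\q) \leq e^{(1-kd_k)t}$ and hence $\omega^\times(\f(\x)) > v$. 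Without this mechanism the passage from ``short vector for $g_\T u_\y$'' back to ``$\omega^\times(\y)$ large'' breaks down in the multiplicative setting, because $|q_i|$ small does not by itself control $|q_i|_+$; your proposal does not supply it.
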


Theorem \ref{thm: mainthm} shows that  multiplicative  Diophantine exponents of hyperplanes are inherited by their nondegenerate submanifolds. We will also
calculate explicitly Diophantine exponents of such spaces in terms of the coefficients of their parameterizing maps.
 In \S4 we will establish

\begin{theorem}\label{thm: oml}
Let $\LL$ be a hyperplane of  $\R^n$ defined by
\begin{equation}\label{eq: ldefi}
(x_1,x_2,\ldots,x_{n-1})\rightarrow
\left(a_1x_1+a_2x_2+\ldots+a_{n-1}x_{n-1}+a_n,x_1,x_2,\ldots,x_{n-1}\right).
\end{equation}
Denote vector $(a_1,\ldots,a_{n-1},a_n)\in \R^n$ by $\mathbf{a}$.
Suppose that $s-1$ is equal to the number of
nonzero elements in $\{a_1,\ldots,a_{n-1}\}$. Then we have
\begin{equation}\label{result1}
\omega^\times(\LL)=\max \left(n,\dfrac{n}{s}\sigma(\mathbf{a}) \right),
\end{equation}
where
\begin{equation}
\label{eq: sigma} \sigma(\mathbf{a})=\sup \left\{v\mid\exists \;\infty
\text{ many } q \in \Z \text{ with } \|q\mathbf{a}+\p\|<|q|^{-v}
\text{ for some } \p \in \Z^n\right\}.
\end{equation}
\end{theorem}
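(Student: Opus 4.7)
By Theorem~\ref{thm: mainthm}, $\omega^\times(\LL)$ equals $\inf\{\omega^\times(\x) : \x \in \LL\}$, so it suffices to prove matching bounds on this infimum. The plan is to establish $\omega^\times(\x) \geq \max(n, \frac{n}{s}\sigma(\mathbf{a}))$ for every $\x \in \LL$, and then to show via quantitative nondivergence that on a positive-measure subset of $\LL$ the exponent $\omega^\times(\x)$ does not exceed this bound.

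For the lower bound, I expand the linear form using the parametrization \eqref{eq: ldefi}: for $\q = (q_1, \dots, q_n) \in \Z^n$ and $p \in \Z$,
\begin{equation*}
\langle \q, \x \rangle + p = \sum_{i=1}^{n-1}(q_1 a_i + q_{i+1}) x_i + (q_1 a_n + p).
\end{equation*}
Given $(q, \pp) \in \Z \times \Z^n$ with $\|q\mathbf{a} + \pp\| < |q|^{-v}$ for some $v < \sigma(\mathbf{a})$, I set $\q := (q, p_1, \dots, p_{n-1})$ and $p := p_n$. Then $|\langle \q, \x\rangle + p|$ is at most a constant (depending on $\x$) times $\|q\mathbf{a}+\pp\|$. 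The key observation is that whenever $a_i = 0$ we may choose $p_i = 0$ without affecting $\|q\mathbf{a}+\pp\|$, so only $s$ of the coordinates of $\q$, namely $q$ and those $p_i$ with $a_i \neq 0$, can be of size $\sim |q|$, giving $\Pi_+(\q) \ll |q|^s$. Hence $|\langle \q, \x\rangle + p| \ll \Pi_+(\q)^{-v/s}$, yielding $\omega^\times(\x) \geq nv/s$; sending $v \uparrow \sigma(\mathbf{a})$ and combining with the universal Minkowski bound $\omega^\times(\x) \geq n$ gives the desired lower bound.

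For the upper bound I follow the dynamical strategy of \cite{KM, extremal, exponent}. Given $(\q, p)$ with $|\langle \q, \x\rangle + p| < \Pi_+(\q)^{-v/n}$, choose $t_i := \log |q_i|_+$ and $t_0 := \sum_i t_i$; then under $\gt$ the unipotent translate $\uy\Z^{n+1}$ (where $\y$ is the image of $\x$ under the parametrization of $\LL$) contains a vector whose sup-norm is comparable to $\Pi_+(\q)^{1-v/n}$. Controlling $\omega^\times(\x)$ from above is therefore equivalent to bounding how often such short vectors appear as $\T$ varies. This is the content of the strengthened quantitative nondivergence Lemma~\ref{lem: thm2.2} applied to a parametrization of a neighborhood in $\LL$: the nondivergence estimate reduces the question to computing, over all rational sublattices $\Gamma \subset \Z^{n+1}$ compatible with the structure of $\LL$, the maximal rate at which their $\gt$-image can shrink.

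The main obstacle is the multiparameter optimization that replaces the single-parameter analysis of \cite{exponent}. Since $\gt$ acts with independent weights on each coordinate, one must partition the parameter space by which coordinates are expanding, and simultaneously track which sublattices $\Gamma$ give genuine obstructions. The vanishing pattern of $\mathbf{a}$ should constrain the admissible sublattices, leaving precisely those arising from simultaneous rational approximations to $\mathbf{a}$ as sources of an exponent strictly above $n$. Identifying these sublattices and computing their associated shrinkage rate should produce exactly $\frac{n}{s}\sigma(\mathbf{a})$, with the factor $1/s$ reflecting the number of expanding coordinates surviving the vanishing pattern and $\sigma(\mathbf{a})$ capturing the simultaneous approximation rate of $\mathbf{a}$ itself. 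Matching this with the lower bound then yields the claimed equality.
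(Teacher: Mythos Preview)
Your lower bound is a clean elementary argument that the paper does not isolate: the paper obtains both inequalities at once from the characterization $\omega^\times(\LL)=\sup\{v\mid \text{Condition~\ref{condition: equi} fails}\}$ (equation~\eqref{eq: computingsigmal}), whereas you get $\omega^\times(\x)\ge \frac{n}{s}\sigma(\mathbf a)$ directly by building approximating $\q$'s from approximations to $\mathbf a$. That half is fine.

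The upper bound, however, is only a wish list. Two concrete pieces are missing. First, you never reduce to rank-one subgroups: in the nondivergence machinery one must check \eqref{eq: critical} for \emph{all} $\Gamma\subset\Z^{n+1}$, and the paper disposes of $\mathrm{rk}(\Gamma)>1$ via the explicit bound $\|R_0\mathbf c(\w)\|\ge 1$ for $j>1$ (Lemma following \eqref{eq: critical1}). Without this, your ``admissible sublattices'' are not yet identified. Second, and more importantly, for $j=1$ you must compute $\|R_0\mathbf c(\w)\|$ as in \eqref{eq: rcj} and observe that it is bounded below by a fixed positive constant \emph{unless} $p_{s+1}=\cdots=p_n=0$ and $p_1\cdots p_s\neq 0$; this is exactly what forces the obstructing vectors into $\Z^{n+1}_s$ and is the mechanism by which $s$ enters. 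You describe this as something the vanishing pattern ``should'' do, but the proof is the computation in \eqref{eq: rcj}. Once those vectors are isolated, the paper's chain \eqref{eq: jdkt3}--\eqref{eq: jdkt5}, using $|p_i|\asymp|p_1|$ for $i\le s$, converts the dynamical condition into $\|p_1\mathbf a+\text{(integers)}\|\prec |p_1|^{-su/n}$, whence $\sigma(\mathbf a)=\tfrac{s}{n}\omega^\times(\LL)$ when the latter exceeds $n$. Your sketch gestures at this endpoint but skips the calculation that produces it.
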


From Theorem \ref{thm: oml} we see that multiplicative Diophantine exponents of $\LL$ and its nondegenerate submanifolds
are dependent on
the parameter $s$. Moreover $s$ takes on integral values from 1 to $n$ and is dependent on the first $n-1$ terms of $\mathbf{a}$ while unaffected by the last term $a_n$.

By comparison as a special case of   \cite[Theorem 0.2]{exponent}, for hyperplane $\LL$ described by \eqref{eq: ldefi}, we have
\begin{equation}\label{result2}
\omega(\LL)=\max\left(n,\sigma(\mathbf{a})\right).
\end{equation}
Consequently
\begin{equation}
\omega^\times(\LL)=\omega(\LL) \quad \text{iff}\quad s=n \quad \text{iff} \quad a_1a_2\cdots a_{n-1}\neq 0;
\end{equation}
\begin{equation}
\omega^\times(\LL)>\omega(\LL) \quad \text{iff} \quad s<n \quad \text{iff} \quad a_1a_2\cdots a_{n-1}= 0.
\end{equation}
In this way we exhibit classes of affine subspaces which are extremal but not strongly extremal.
The main result of this paper is actually much more general than Theorem \ref{thm: mainthm}. We will be considering maps from Besicovitch metric
spaces endowed with Federer measures (we postpone definitions of terminology till \S3).

\section{Dynamics}

We will study homogeneous dynamics and how it relates to Diophantine approximation of vectors. First we define the space of unimodular lattices as follows:
\begin{equation}
\Omega_{n+1}\stackrel{\mathbf{def}}{=}\SL(n+1,\R)\diagup \SL(n+1,\Z).
\end{equation}
 $\Omega_{n+1}$ is noncompact,   and can be decomposed as
\begin{equation} \label{eq: space}
\Omega_{n+1}=\bigcup_{\epsilon>0}K_\epsilon,
\end{equation}
where
\begin{equation}
K_\epsilon=\Big\{\Lambda \in \Omega_{n+1}\left|\; \left\|v\right\| \geq
\epsilon \text{  for all nonzero   }v \in \Lambda\right.\Big\}.
\end{equation}
Each $K_\epsilon$ is compact by Mahler's compactness criterion.

\begin{remark}
$\|\cdot\|$ can be any norm on $\R^{n+1}$ and any two such norms are equivalent. We assume that it is the Euclidean norm from now on.
\end{remark}

We set
\begin{equation}\label{fgt}
g_{\mathbf{t}}=\diag\Big\{e^{-t_1},\ldots,e^{-t_n},e^{t}\Big\}\in \SL(n+1,\R),
\end{equation}
where
\begin{equation}\label{ew}
t_i\geq 0,\quad t=\sum_{i=1}^n t_i,\quad \mathbf{t}=(t_1,\ldots, t_n).
\end{equation}
Also set
\begin{equation}\label{eq: uofy} u_{\y}=\left(\begin{array}{cc} I_n & 0\\
\y& 1\end{array}\right).
\end{equation}
The lattice $u_{\y}\Z^{n+1}$ takes on the form
\begin{equation}\label{eq: uyzn}
u_{\y}\Z^{n+1}=\left\{\left. \left(
          \begin{array}{c}
            \q\\
            \q\y+p
          \end{array}
        \right)\right|\;\q \in \Z^{n}, \p \in \Z
\right\}.
\end{equation}
Also we define
\begin{equation}\label{otv}
W^\times_v \stackrel{\mathrm{def}}{=}\Big\{\y \in \R^n \left|\;\omega^\times(\y)\geq v\right.\Big\}.
\end{equation}
By definition
\begin{equation}
\ot=\sup \Big\{v\left|\;\y \in W^\times_v\right.\Big\}.
\end{equation}
When we have $g_{\mathbf{t}}$ act on  vectors in $u_{\y}\Z^{n+1}$ as defined by \eqref{eq: uyzn}, the first $n$ components will be contracted and the last one  expanded. We propose the following lemma which shows a  correlation between $\omega^\times(\y)$
and trajectories of certain lattices in $\Omega_{n+1}$. The original format stems from \cite[Lemma 5.1]{extremal}, but what we need here is stronger and more precise.

\begin{lem}\label{lem: equiva1}
Suppose we are given a positive integer $k$ $(1\leq k \leq n)$ and a subset $E$ of $\R\times \Z^{n+1}$
which is discrete and homogeneous with respect to
positive integers, and  satisfies the condition that for every $(x,\z)\in E$,  exactly $k$ entries
of $\z$ are nonzero. Take $v>n$ and $c_k=\frac{v-n}{kv+n}$, then
the following are equivalent:
\begin{enumerate}
\item[(i)]
 $\exists (x,\z) \in E$  with arbitrarily large $\|\z\|$ such that
\begin{equation}\label{lowerotimes}
|x| \leq {\Pi_+(\z)}^{-v/n}
\end{equation}
\item[(ii)] $\exists$ an unbounded
set of $\mathbf{t}\in \R_+^n$ such that for some  $(x,\z) \in E
\backslash\{0\}$ we have
\begin{equation}\label{omegaequal}
 \max\Big(e^{t}|x|,\quad e^{-t_i}|z_i|\Big) \leq
e^{-c_kt},\quad 1\leq i\leq n
\end{equation}
\end{enumerate}
\end{lem}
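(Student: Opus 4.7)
The proof is by explicit construction of $\mathbf{t}$ for (i)$\Rightarrow$(ii) and by algebraic manipulation of the inequalities for (ii)$\Rightarrow$(i). Both directions pivot on the identity
\begin{equation*}
\frac{1+c_k}{1-kc_k}=\frac{v}{n},
\end{equation*}
which is a short calculation from $c_k=\frac{v-n}{kv+n}$ and explains the precise form of $c_k$; in particular $1-kc_k=\frac{n(k+1)}{kv+n}>0$.

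For (i)$\Rightarrow$(ii), given $(x,\z)\in E$ with $|x|\leq \Pi_+(\z)^{-v/n}$, let $S\df\{i:z_i\neq 0\}$, so $|S|=k$, and put
\begin{equation*}
t_i \df c_k t + \log|z_i| \text{ for } i\in S,\qquad t_i \df 0 \text{ for } i\notin S,\qquad t\df\frac{\log \Pi_+(\z)}{1-kc_k}.
\end{equation*}
A direct check using $\sum_{i\in S}\log|z_i|=\log\Pi_+(\z)$ gives $\sum_{i=1}^n t_i=t$, and $t_i\geq 0$ since $|z_i|\geq 1$ on $S$. With these choices $e^{-t_i}|z_i|=e^{-c_k t}$ for $i\in S$ and vanishes otherwise, while $e^t|x|\leq e^{-c_k t}$ is equivalent to $|x|\leq e^{-(1+c_k)t}=\Pi_+(\z)^{-v/n}$ by the identity---exactly the hypothesis of (i). Since $\Pi_+(\z)\geq \|\z\|_\infty$ for integer $\z$, letting $\|\z\|\to\infty$ drives $t\to\infty$ and produces the required unbounded set of $\mathbf{t}$.

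For (ii)$\Rightarrow$(i), multiplying the inequalities $|z_i|\leq e^{t_i-c_k t}$ over $i\in S$ (and using $\sum_{i\in S}t_i\leq \sum_i t_i=t$) yields $\Pi_+(\z)\leq e^{(1-kc_k)t}$; raising to the $v/n$ power gives $\Pi_+(\z)^{v/n}\leq e^{(1+c_k)t}$, and combining with $|x|\leq e^{-(1+c_k)t}$ produces $|x|\leq \Pi_+(\z)^{-v/n}$ for this particular $(x,\z)$. The main obstacle is to extract \emph{arbitrarily large} $\|\z\|$ from the unbounded set of $\mathbf{t}$, which I handle by cases using discreteness of $E$. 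If the $\mathbf{t}$'s in (ii) are witnessed by infinitely many distinct $(x,\z)\in E$, then since $|x|\leq 1$ and $\z$ is integral, discreteness forces $\|\z\|\to\infty$ along a subsequence and (i) is immediate. Otherwise some single $(x_0,\z_0)\in E\setminus\{0\}$ witnesses an unbounded $\mathbf{t}$-set; but $e^t|x_0|\leq e^{-c_k t}$ with $t$ unbounded forces $x_0=0$, and then homogeneity of $E$ under positive integer scaling supplies $(0,m\z_0)\in E$ for every $m\in\N$, giving $\|m\z_0\|$ arbitrarily large with the trivial inequality $|0|\leq \Pi_+(m\z_0)^{-v/n}$.
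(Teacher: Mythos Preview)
Your proof is correct and follows essentially the same approach as the paper's: the same explicit choice of $t$ and $t_i$ for (i)$\Rightarrow$(ii), the same multiplication of the inequalities over the nonzero coordinates for (ii)$\Rightarrow$(i), and the same use of discreteness plus homogeneity to extract arbitrarily large $\|\z\|$. Your presentation is slightly cleaner in isolating the identity $\frac{1+c_k}{1-kc_k}=\frac{v}{n}$ at the outset and in making the finite/infinite dichotomy explicit in the final step, but the substance is identical.
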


\begin{proof}
Suppose (i) holds. Without loss of
generality, assume $|z_i|\geq 1$ for $i\leq  k$ and $z_i=0$
for $i>k$. Define $t$ by
\begin{equation} \label{first}
e^{(1-kc_k)t}=\Pi_+(\z)=|z_1\ldots z_k|.
\end{equation}
Note that $c_k<1/k$ from its definition $c_k=\frac{v-n}{kv+n}$, and $t$ defined in the above equation is nonnegative thereof.

Then for every $t$ define $t_i$ by
\begin{equation}\label{second}
e^{-t_i}|z_i|=e^{-c_kt}\quad \text{if} \quad 1\leq i\leq k,\quad t_i=0
\quad \text{if}\quad i>k.
\end{equation}
Note that from \eqref{first} and \eqref{second} it is verified that
$t=\sum_{i=1}^kt_i$.  And we have
\begin{equation}
e^{t}|x|\leq
e^t{\Pi_+(\z)}^{-v/n}=e^te^{(1-kc_k)(-v/n)t}=e^{t+(1-kc_k)(-v/n)t}.
\end{equation}
Plugging in $c_k=\frac{v-n}{kv+n}$, we get
\begin{equation}
1+ (1-kc_k)(-v/n)=-c_k.
\end{equation}
Hence
\begin{equation}
e^{t+(1-kc_k)(-v/n)t}=e^{-c_kt}.
\end{equation}
Hence (ii) is satisfied. In addition,  by taking $\|\z\|$ arbitrarily large we
produce arbitrarily large $\Pi_+(\z)$ and $t$ from \eqref{first}.

Suppose (ii) holds. Because $(x,\z) \in E$ by reordering entries
of $\z$ such that $|z_i|\geq 1$ \;for\; $i\leq k$ and
$z_i=0$ for $i>k$, we have
\begin{equation}
|z_i|\leq e^{t_i-c_kt} \quad \text{if}\quad  i\leq k,\qquad |x|\leq
e^{-(1+c_k)t}.
\end{equation}
\begin{equation}
\Pi_+(\z)=|z_1\ldots z_k|\leq e^{(t_1-c_kt)+(t_2-c_kt)+\ldots+(t_k-c_kt)}=
e^{t_1+\ldots+t_k-kc_kt}\leq e^{t-kc_kt}.
\end{equation}
By plugging in $c_k=\frac{v-n}{kv+n}$, we get
\begin{equation}
e^{-(1+c_k)t}=(e^{(1-kc_k)t})^{-v/n}.
\end{equation}
 Hence
\begin{equation}
|x|\leq
e^{-(1+c_k)t}=(e^{(1-kc_k)t})^{-v/n}\leq {\Pi_+(\z)}^{-v/n}.
\end{equation}
Also by the discreteness of $E$, if $\|\z\|$ has a uniform bound while $|x|$ tends to zero, $(0,\z_0) \in E$ for some nonzero $\z_0$ and any integral multiple of
$(0,\z_0)$ will satisfy \eqref{lowerotimes}. Obviously  $\|p\z_0\|$ tends to infinity when the integer $p$ tends to infinity.  Therefore (i) is
established.
\end{proof}

\begin{remark}\label{remark1}
In \eqref{omegaequal},
because $|z_i|\leq  e^{t_i-c_kt} $, we have  $t_i-c_kt\geq 0$ for at least $k$ values of $i$. This information is
important because of the following elementary observation which plays an indispensable role in the proof of Lemma \ref{lem: negation2} in
\S 4:
\end{remark}

\begin{lem}\label{lem: elementary}
Suppose  $p\in \Z$ and $|p|\leq  e^\alpha$.
If $\alpha \geq 0$ then we have $|p|_+\leq  e^\alpha$.
\end{lem}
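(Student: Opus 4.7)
The plan is to proceed by a simple case analysis on the size of $|p|$, exploiting the fact that $p$ is an integer so that $|p|<1$ forces $p=0$.

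First I would observe that by definition $|p|_+ = \max(1, |p|)$, so the desired inequality $|p|_+ \leq e^\alpha$ splits into two subclaims: namely $|p| \leq e^\alpha$ and $1 \leq e^\alpha$. The first is exactly the hypothesis of the lemma. The second is immediate from $\alpha \geq 0$, which gives $e^\alpha \geq e^0 = 1$.

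Alternatively, and perhaps more transparently, I would split on whether $|p| \geq 1$ or $|p| < 1$. In the first case, $|p|_+ = |p| \leq e^\alpha$ directly from the hypothesis. In the second case, since $p \in \Z$, the only possibility is $p = 0$, so $|p|_+ = 1$, and this is $\leq e^\alpha$ because $\alpha \geq 0$.

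There is no real obstacle here; the only point worth flagging is that the hypothesis $\alpha \geq 0$ is essential, since otherwise the $p=0$ (or more generally $|p|<1$) case would fail. This is precisely the reason that Remark \ref{remark1} emphasizes that the inequality $t_i - c_k t \geq 0$ holds for at least $k$ indices: those are exactly the indices where Lemma \ref{lem: elementary} will later be applied to pass from a bound on $|z_i|$ to a bound on $|z_i|_+$ inside $\Pi_+(\z)$.
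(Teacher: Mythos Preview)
Your proof is correct and is essentially the same as the paper's, which simply says ``From \eqref{distance} directly.'' You have merely spelled out the obvious case analysis behind that one-line appeal to the definition of $|p|_+$.
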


\begin{proof}
From \eqref{distance} directly.
\end{proof}

\begin{remark}
If $\alpha<0$, then $|p|\leq  e^\alpha$ does not imply $|p|_+\leq  e^\alpha$. This distinction is important because in multiplicative Diophantine approximation we think of $|p|_+$ instead of $|p|$.
\end{remark}

We define
\begin{equation}\label{eq: decompose}
 \Z^{n+1}_{k}=\left\{\left.(\q,p)=(q_1,\ldots,q_n,p)\in
\Z^{n+1}\right|\text{ exactly $k$ entries of }\q \text{ are nonzero}\right\}.
\end{equation}
Apparently
\begin{equation}
 \Z^{n+1}=\bigcup_{k=0}^n \Z^{n+1}_{k}.
\end{equation}
In light of  Lemma \ref{lem: equiva1}, if we set $v>n$, $\y \in \R^n$ and
 \\$E=\left\{(|\langle\q,\y\rangle+p|,\q )\left|(\q,p)\in \Z^{n+1}_k\right.\right\}$,
 condition (i) of Lemma \ref{lem: equiva1} implies that
\begin{equation}\label{add1}
 \y \in W^\times_{v}.
\end{equation}
Condition (ii) becomes equivalent to:  $\exists$  an unbounded set of  $\mathbf{t} \in \R_+^{n}$ such that
\begin{equation}\label{eq: restriction}
t_i \geq c_kt \text{   for at least $k$ values of  } i,
\end{equation}
and
\begin{equation} \label{eq: omegaequa2}
g_\mathbf{t}u_\y\Z^{n+1}_k \text{ contains at least one  vector with norm } \leq e^{-c_kt}.
\end{equation}
Furthermore
\begin{equation}\label{eq: ck}
c_k=\dfrac{v-n}{kv+n}\Longleftrightarrow  v=\dfrac{n+nc_k}{1-kc_k},\qquad 1\leq k \leq n
\end{equation}
Recall that by \eqref{ew} $\mathbf{t}$ is multiparameter vector in $\R_+^{n}$ and $t=\sum_{i=1}^n t_i$.
If we set
\begin{equation}\label{gammak}
\gamma_k(\y)= \sup \Big\{c_k \mid\eqref{eq: omegaequa2} \text{ holds  for an unbounded set of } \mathbf{t} \in \R_+^{n} \text{ satisfying }\eqref{eq: restriction}  \Big\},
\end{equation}
we have the following theorem, which is the main result of this section.

\begin{theorem}
 $\forall \y \in \R^n$,  we have
\begin{equation} \label{eq: otyequal}
\omega^\times(\y)=\max_{\substack{1\leq k \leq n}}\dfrac{n+n\gamma_k(\y)}{1-k\gamma_k(\y)}.
\end{equation}
\end{theorem}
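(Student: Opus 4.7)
The plan is to stratify the integer vectors $\q \in \Z^n$ appearing in \eqref{omegatimes} by the number $k$ of nonzero coordinates they carry, and on each stratum apply Lemma \ref{lem: equiva1} to translate a Diophantine statement into the dynamical condition underlying $\gamma_k(\y)$. Concretely, for each $k \in \{1, \dots, n\}$ I would introduce the partial exponent
\[
\omega^\times_k(\y) = \sup\Bigl\{v : |\langle\q,\y\rangle+p| < \Pi_+(\q)^{-v/n} \text{ has infinitely many solutions } (\q,p) \in \Z^{n+1}_k\Bigr\}.
\]
Since every nonzero $\q \in \Z^n$ has a unique $k$, partitioning any infinite family of solutions realizing exponent $v$ according to $k$ and running a pigeonhole argument along a sequence $v_j \uparrow \omega^\times(\y)$ yields $\omega^\times(\y) = \max_{1 \leq k \leq n} \omega^\times_k(\y)$. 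It therefore suffices to establish
\[
\omega^\times_k(\y) = \frac{n + n\gamma_k(\y)}{1 - k\gamma_k(\y)}
\]
for each fixed $k$.

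The main step is to apply Lemma \ref{lem: equiva1} to the set
\[
E_k = \bigl\{(|\langle\q,\y\rangle+p|, \q) : (\q,p) \in \Z^{n+1}_k\bigr\} \subset \R \times \Z^{n+1}.
\]
First I would verify the hypotheses: $E_k$ is discrete (for any bounded $\z$-region only finitely many $p$ keep $|\langle\q,\y\rangle+p|$ bounded), homogeneous under positive integer scaling (the map $(\q,p) \mapsto (m\q, mp)$ preserves the number of nonzero coordinates and commutes with $|\langle\cdot,\y\rangle+\cdot|$), and by construction each of its elements has exactly $k$ nonzero entries in the $\z$-part. For $v > n$ and $c_k = (v-n)/(kv+n) \in (0, 1/k)$, the lemma then equates the existence of infinitely many solutions in $\Z^{n+1}_k$ for exponent $v$ with the existence of an unbounded set of $\mathbf{t} \in \R_+^n$ such that $g_\mathbf{t} u_\y \Z^{n+1}_k$ contains a nonzero vector of norm $\leq e^{-c_k t}$; by Remark \ref{remark1} this unbounded set automatically satisfies \eqref{eq: restriction}, so the dynamical condition coincides with the one defining $\gamma_k(\y)$. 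Hence $\omega^\times_k(\y) \geq v$ if and only if $\gamma_k(\y) \geq c_k$.

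To conclude, I invert the bijection between $v$ and $c_k$. By \eqref{eq: ck}, the map $c \mapsto (n + nc)/(1 - kc)$ is a strictly increasing bijection $[0, 1/k) \to [n, \infty)$, so the equivalence of the previous paragraph transfers through the suprema and gives $\omega^\times_k(\y) = (n + n\gamma_k(\y))/(1 - k\gamma_k(\y))$, with the boundary value $\gamma_k(\y) = 1/k$ corresponding to $\omega^\times_k(\y) = \infty$. Taking the maximum over $k$ yields \eqref{eq: otyequal}. The main subtlety I anticipate is bookkeeping at the boundaries: reconciling the strict inequality in \eqref{omegatimes} with the non-strict one in Lemma \ref{lem: equiva1}(i) (harmless, since both are killed by the outer $\sup$), and treating the degenerate case $v = n$ (equivalently $c_k = 0$) so that the formula remains meaningful for those $k$ at which the partial exponent is not achieved. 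Everything else is essentially a clean unpacking of the equivalence given by Lemma \ref{lem: equiva1}.
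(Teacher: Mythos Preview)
Your proposal is correct and follows essentially the same route as the paper: both arguments stratify by the number $k$ of nonzero coordinates of $\q$, apply Lemma~\ref{lem: equiva1} on each stratum with the set $E_k=\{(|\langle\q,\y\rangle+p|,\q):(\q,p)\in\Z^{n+1}_k\}$, invoke Remark~\ref{remark1} to see that the restriction~\eqref{eq: restriction} is automatic, and use the monotone bijection~\eqref{eq: ck} to pass between suprema. The one item you omit is the paper's additional verification (folded into this proof but really used only later for Borel--Cantelli) that in~\eqref{gammak} one may replace $\mathbf{t}\in\R_+^n$ by $\mathbf{t}\in\Z_+^n$; this is not needed for the theorem as stated, so your argument is complete.
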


\begin{proof}
We first prove that in \eqref{gammak} we can have
$\mathbf{t} \in \Z_{+}^n$ as opposed to $\mathbf{t} \in \R_+^{n}$. We adopt arguments of
\cite[Corollary 2.2]{KM} here.
Suppose for some $\mathbf{t}=(t_1,t_2,\ldots,t_n)$ and $t=\sum t_i$ , we have
$t_i \geq c_kt \text{   for at least $k$ values of  } i$ as well as
\[g_\mathbf{t}u_\y\Z^{n+1}_k \text{ contains at least one  vector with norm } \leq e^{-c_kt}.\]
Denote by $[\mathbf{t}]$ the vector consisting of integer parts of $t_i$. Then the ratio of lengths of the shortest vector of
$g_{[\mathbf{t}]}u_\y\Z^{n+1}_k$ and the shortest vector of $g_{\mathbf{t}}u_\y\Z^{n+1}_k$ is bounded from above by
\[\left\|g_{\mathbf{t}}g_{[\mathbf{t}]}^{-1}\right\|=\left\|g_{\mathbf{t}-[\mathbf{t}]}\right\|\leq e^n.\]
Hence we get
\[g_{[\mathbf{t}]}u_\y\Z^{n+1}_k \text{ contains at least one  vector with norm } \leq e^n e^{-c_kt}.\]
When $t$ is large we can decrease $c_k$ slightly to $c_k'$ and get
\[g_{[\mathbf{t}]}u_\y\Z^{n+1}_k \text{ contains at least one  vector with norm } \leq e^{-c_k't}\]
as well as
\[[t_i] \geq c_k'\left(\sum[t_i]\right) \text{   for at least $k$ values of  } i.\]
Therefore
\[\gamma_k(\y)= \sup \Big\{c_k \mid\eqref{eq: omegaequa2} \text{ holds  for an unbounded set of } \mathbf{t} \in \Z_{+}^n \text{ satisfying }\eqref{eq: restriction} \Big\}.\]
Next we show
\[ \omega^\times(\y)\geq \dfrac{n+n\gamma_k(\y)}{1-k\gamma_k(\y)},  \quad 1\leq k \leq n.\]
To see this,  apply Lemma \ref{lem: equiva1} $n$ times, letting $k$ go from $1$ to $n$. For each $k$, condition (ii)  of Lemma \ref{lem: equiva1} implies
 condition (i), which in turn implies  that
$\y \in W^\times_{v}$  or $\omega^\times(\y)\geq v$.

On the other hand, \eqref{add1} clearly forces condition (i) of Lemma \ref{lem: equiva1} to
hold for some $k$ between $1$ and $n$. Hence
\[ \omega^\times(\y)\leq\max_{\substack{1\leq k \leq n}}\dfrac{n+n\gamma_k(\y)}{1-k\gamma_k(\y)}.\]
\eqref{eq: otyequal} is therefore established.
\end{proof}

Suppose $\nu$ is a measure on $\R^n$ and $v> n$ , by
definition
\begin{equation}
\omega^\times(\nu)\leq v
\quad \text{if and only if} \quad \nu(W^\times_u)=0, \quad \forall u>v.
\end{equation}
By the Borel-Cantelli Lemma and the above theorem, a sufficient condition for
$\omega^\times(\nu)\leq v$ is:

\begin{condition}\label{condition: upperboundot}
$\forall k$ $(1\leq k\leq n)$, $\forall d_k>c_k$,  we have
\begin{equation}\label{eq: upperboundot}
\sum_{\substack{\mathbf{t} \in \Z_{+}^n,\\ t_i\geq d_kt \text{ for
at least }\\ k \text{ values of }i}}\nu\Big(\Big\{\y\left|g_{\mathbf{t}}u_\y\Z^{n+1}_k \text{ has at least one nonzero vector with norm} \leq e^{-d_kt}\right.
\Big\}\Big)<\infty.
\end{equation}
\end{condition}

\begin{remark}
Condition \ref{condition: upperboundot} is helpful because it allows us to find upperbounds of $\omega^\times(\lambda)$ by applying  quantitative nondivergence in the next section. The restriction similar to \eqref{eq: restriction} will be used in the proof of Lemma \ref{lem: negation2} in \S4.
\end{remark}

\section{Quantitative Nondivergence}

Before stating nondivergence quantitative results, we
first introduce an assembly of relevant concepts developed in \cite{exponent}, \cite{KLW} and \cite{KM}.
A metric space $X$ is called $N-Besicovitch$ if for any bounded
subset $A$ and any family $\beta$ of nonempty open balls of $X$ such
that each $x \in A$ is a center of some ball of $\beta$, there is a
finite or countable subfamily $\{\beta_i\}$ of $\beta$ covering $A$
with multiplicity at most $N$. $X$ is $Besicovitch$ if it is $N-Besicovitch$ for some $N$.

Let $\mu$ be a locally finite Borel measure on $X$, $U$ an open
subset of $X$ with $\mu(U)>0$. Following \cite{KLW} we call $\mu$  $D-Federer$ on $U$ if
\begin{equation}
\sup _{\begin{subarray}{1}x \in \textrm{supp }\mu,\; r>0
\\ B(x,3r)\subset U  \end{subarray}}\dfrac{\mu(B(x,3r))}{\mu(B(x,r)}<D
\end{equation}
$\mu$ is said to be $Federer$ if for $\mu$-a.e. $x \in X$ there
exists a neighborhood $U$ of $x$ and $D>0$ such that $\mu$ is
$D-Federer$ on $U$.

An important illustration of the above notions is that $\R^d$ is Besicovitch and $\lambda$, the Lebesgue measure
is $Federer$. Many natural measures supported on fractals are also known to be $Federer$ (see \cite{KLW} for technical details).

For a subset $B$ of $X$ and a function $f$ from $B$ to a normed space with norm $\|\textrm{ } \|$, we define
  $\|f\|_B= \sup_{x \in B}\|f(x)\|$. If $\mu$ is a
  Borel measure on $X$ and $B$ a subset of $X$ with $\mu(B)>0$
  $\|f\|_{\mu, B}$ is set to be $\|f\|_{B \cap \textrm{supp }\;\mu}$.

 A function $f: X\rightarrow \R$ is called $(C,\alpha)$-good on $U \subset X$
with respect to $\mu$ if for any open ball $B$ centered in supp
$\mu$ one has
\begin{equation}
\forall \varepsilon>0 \qquad  \mu(\{x \in B\mid |f(x)|<\varepsilon\})\leq
C\left(\dfrac{\varepsilon}{\|f\|_{\mu,B}}\right)^{\alpha}\mu(B).
\end{equation}
Roughly speaking a function is $(C,\alpha)$-good if the set of points where it takes small value
has small measure. In Lemma \ref{lem: thm2.2} we use the fact that functions of the form $\x\rightarrow
\|h(\x)\Gamma\|$ , where $\Gamma$ runs through subgroups of $\Z^{n+1}$, are $(C, \alpha)$-good with uniform $C$ and $\alpha$.

Let $\mathbf{f}=(f_1,\ldots,f_n)$ be a map from $X$ to $\R^n$. Following \cite{exponent} we say that ($\mathbf{f}, \mu$) is good
at $x \in X$ if there exists a neighborhood $V$ of $x$ such that any linear combination of $1,f_1,\ldots,f_n$
is $(C,\alpha)$-good on $V$ with respect to $\mu$ and ($\mathbf{f}, \mu$) is good if ($\mathbf{f}, \mu$) is good at $\mu$-almost
every point. Reference to measure will be omitted if $\mu=\lambda$,  and we will simply say that $\mathbf{f}$
is good or good at $x$. For example polynomial maps are good. \cite{extremal} proved the following result:

\begin{lem}\label{lem: goodmap}
Let $\LL$ be an affine subspace of $\R^n$, and let $\mathbf{f}$ be a smooth map from $U$, an open subset of $\R^d$ to $\LL$
which is nondegenerate at $\x \in U$; then $\mathbf{f}$ is good at $\x$.
\end{lem}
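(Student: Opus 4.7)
My plan is to reduce the affine-subspace statement to the classical goodness theorem of Kleinbock--Margulis for nondegenerate smooth maps into the full ambient space. Write $\LL = \p_0 + V_0$, with $\p_0 \in \LL$ a fixed base point and $V_0 \subset \R^n$ the linear part of $\LL$, and set $m := \dim V_0$. Fix a basis $v_1, \ldots, v_m$ of $V_0$ and decompose $\mathbf{f} = \p_0 + \sum_{j=1}^{m} g_j\, v_j$ for uniquely determined smooth scalar functions $g_j : U \to \R$.

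The key algebraic observation is that every affine combination $c_0 + c_1 f_1 + \cdots + c_n f_n$ can be rewritten in the form $a + b_1 g_1 + \cdots + b_m g_m$, and conversely, with $(c_0, \mathbf{c}) \mapsto (a, \mathbf{b})$ a linear surjection $\R^{n+1} \twoheadrightarrow \R^{m+1}$ (surjectivity follows from the fact that $v_1, \ldots, v_m$ span $V_0$). Hence $\mathbf{f}$ being good at $\x$ is equivalent to $\mathbf{g} := (g_1, \ldots, g_m) : U \to \R^m$ being good at $\x$. Moreover, nondegeneracy of $\mathbf{f}$ at $\x$ translates immediately into the statement that the partial derivatives $\partial^\beta \mathbf{g}(\x) \in \R^m$ with $1 \leq |\beta| \leq \ell$ span $\R^m$; that is, $\mathbf{g}$ is nondegenerate at $\x$ in the usual full-ambient-space sense.

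At this point I would invoke the classical Kleinbock--Margulis theorem \cite[Prop.~3.4]{KM}, which asserts that such a $\mathbf{g}$ is good at $\x$. Uniformity of the constants $C, \alpha$ across the parameters $(a, \mathbf{b})$ is obtained by scaling invariance (normalize $\|(a, \mathbf{b})\| = 1$), compactness of the resulting unit sphere in $\R^{m+1}$, and continuity of $\partial^\beta \mathbf{g}$ near $\x$: parameters with $\mathbf{b} \neq 0$ give a uniform lower bound on $\max_{|\beta| \leq \ell} |\partial^\beta h(\x)|$, where $h := a + \langle \mathbf{b}, \mathbf{g}\rangle$, while parameters with $\mathbf{b} = 0$ reduce $h$ to a constant, for which the $(C, \alpha)$-good property is trivial.

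The main obstacle is the underlying Kleinbock--Margulis theorem itself, whose proof is a delicate Taylor-expansion argument on level sets of a smooth function with a prescribed non-flatness condition; in the present context it can and should be used as a black box. The genuine content of the lemma is precisely the basis-change reduction above, which repackages the degenerate-looking nondegeneracy data from the affine $\LL$-setting into the full-space nondegeneracy data required by the classical theorem.
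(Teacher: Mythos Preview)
Your reduction is correct and is essentially the argument given in \cite{extremal}, to which the present paper simply defers: the paper does not supply its own proof of this lemma but quotes it verbatim as a result of Kleinbock. The basis-change you describe --- rewriting $\mathbf{f}=\p_0+\sum_{j=1}^m g_j v_j$, observing that the $\R$-span of $1,f_1,\dots,f_n$ coincides with that of $1,g_1,\dots,g_m$, and that nondegeneracy of $\mathbf{f}$ in $\LL$ is exactly nondegeneracy of $\mathbf{g}$ in $\R^m$ --- is precisely how \cite{extremal} reduces to \cite[Prop.~3.4]{KM}.

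One minor comment on your uniformity paragraph: splitting the unit sphere into $\mathbf{b}\ne 0$ and $\mathbf{b}=0$ and arguing compactness on the former does not quite work as written, since $\{\mathbf{b}\ne 0\}$ is not closed. The clean way is to include the zeroth-order term and note that $(a,\mathbf{b})\mapsto \max_{0\le|\beta|\le \ell}|\partial^\beta h(\x)|$ is continuous and strictly positive on the whole sphere (positivity at $\mathbf{b}=0$ comes from $|a|=1$, and at $\mathbf{b}\ne 0$ from nondegeneracy). In any case this is internal to the proof of \cite[Prop.~3.4]{KM}, which you are entitled to invoke as a black box, so this does not affect the validity of your argument.
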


Furthermore if $\LL$ is an affine subspace of $\R^n$ and $\mathbf{f}$
a map from X into $\LL$, following \cite{exponent} we say ($\mathbf{f}, \mu$) is nonplanar in $\LL$ at $x
\in \textrm{supp }\mu$ if $\LL$ is equal to the intersection of all
affine subspaces containing $\mathbf{f}(B\cap \textrm{supp }\mu)$ for any open neighborhood $B$ of $x$.
($\mathbf{f}, \mu$) is nonplanar in $\LL$ if ($\mathbf{f}, \mu$) is nonplanar in $\LL$ at $\mu$-a.e. $x$. We skip saying $\mu$ when $\mu=\lambda$ and
skip $\LL$ if $\LL=\R^n$. From definition
($\mathbf{f}, \mu$) is nonplanar if and only if for any open $B$ of positive measure, the restrictions of $1,f_1,\ldots,f_n$ to $B\cap \textrm{supp }\mu$ are linearly independent over $\R$. Clearly nondegeneracy in $\LL$ implies nonplanarity in $\LL$.  Nondegenerate smooth maps from $\R^d$ to
$\R^n$ as in Lemma \ref{lem: goodmap} give typical examples of nonplanarity.

Let $\Gamma$ be any discrete subgroup of $\R^{k}$  we denote by $rk(\Gamma)$ the rank of $\Gamma$ when viewed as a $\Z$-module.
The following is exactly  \cite[ Theorem 2.2]{exponent}.

\begin{lem}\label{lem:  thm2.2}
Let $m$, $N \in \N$ and $C,D,\alpha,\rho >0$ and suppose we are
given an $N-Besicovitch$ metric space $X$, a ball $B=B(x_0,
r_0)\subset X$, a measure $\mu$ which is $D-Federer$ on
$\tilde{B}=B(x_0, 3^mr_0)$ and a map $h\colon \tilde{B}\to
\GL_m(\R)$. Assume the following two conditions hold:
\begin{enumerate}
\item[(i)] $\forall \;\Gamma \subset \Z^m$,  the function $ x\rightarrow
\left\|h(x)\Gamma\right\|$ is $(C, \alpha)$-good on $\tilde{B}$ with respect to
$\mu$;

\item[(ii)] $\forall \; \Gamma \subset \Z^m$, $\left\|h(\cdot )\Gamma\right\|_{\mu, B}\geq\rho^{rk(\Gamma)}$.
\end{enumerate}
Then for any positive $\epsilon \leq \rho$, we have
\begin{equation}
\mu\Big(\Big\{x \in B\mid h(x)\Z^m \notin K_{\epsilon}\Big\}\Big)\leq
mC(ND^2)^m\left(\dfrac{\epsilon}{\rho}\right)^{\alpha}\mu(B).
\end{equation}
\end{lem}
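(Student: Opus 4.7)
The plan is to follow the Kleinbock-Margulis quantitative nondivergence paradigm, adapted here to Besicovitch metric spaces carrying Federer measures. The target is the $\mu$-measure of the set $E_\epsilon:=\{x\in B : h(x)\Z^m\notin K_\epsilon\}$. By Mahler's compactness criterion, $x\in E_\epsilon$ precisely when some primitive rank-$1$ subgroup $\Gamma\subseteq\Z^m$ satisfies $\|h(x)\Gamma\|<\epsilon$. The Kleinbock-Margulis strategy broadens this inductively to higher-rank witnesses: at every $x\in E_\epsilon$, some nonzero primitive subgroup $\Gamma\subseteq\Z^m$ is ``responsible'' in the sense that $\|h(x)\Gamma\|$ is anomalously small relative to the global lower bound $\rho^{rk(\Gamma)}$ furnished by hypothesis (ii).

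First, for $x\in E_\epsilon$ I would select a primitive subgroup $\Gamma_x\subseteq\Z^m$ of maximal rank subject to $\|h(x)\Gamma_x\|<\rho^{rk(\Gamma_x)}$; such $\Gamma_x$ exists because rank-$1$ witnesses always work. A continuity argument together with (ii) then produces a ball $B_x\subset\tilde B$ on which $\Gamma_x$ remains the anomaly-realizing subgroup and no strictly larger-rank subgroup becomes problematic; this is the inductive reduction on rank at the heart of the argument. The $(C,\alpha)$-good hypothesis (i), applied to $y\mapsto\|h(y)\Gamma_x\|$ on $B_x$ with local supremum at least $\rho^{rk(\Gamma_x)}$, yields a bound of the shape
\[
\mu\bigl\{y\in B_x : \|h(y)\Gamma_x\|<\epsilon\,\rho^{rk(\Gamma_x)-1}\bigr\}\;\leq\;C\Bigl(\frac{\epsilon}{\rho}\Bigr)^{\alpha}\mu(B_x).
\]
To globalize, the $N$-Besicovitch property extracts from $\{B_x\}_{x\in E_\epsilon}$ a subcover of multiplicity at most $N$, so $\sum\mu(B_x)\leq N\mu(\tilde B)\leq ND^m\mu(B)$, the second inequality obtained by iterating the Federer condition through $m$ tripling steps from radius $r_0$ to radius $3^m r_0$. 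Summing the local estimate over this subcover and then over the $m$ possible values of $rk(\Gamma_x)$ yields the advertised bound $mC(ND^2)^m(\epsilon/\rho)^\alpha\mu(B)$.

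The principal obstacle is the rank-induction bookkeeping: guaranteeing that a single $\Gamma_x$ of well-defined rank controls $E_\epsilon\cap B_x$, with larger-rank subgroups rendered harmless by the Federer enlargement and by (ii). This is precisely why the hypotheses are formulated on the enlarged ball $\tilde B=B(x_0,3^m r_0)$ rather than on $B$ itself, and why the $(C,\alpha)$-good constants in (i) must be uniform over the infinite family of all subgroups $\Gamma\subset\Z^m$. Minkowski-type interpolation converting short lattice vectors into anomalously small subgroup covolumes is the key algebraic input, the $(C,\alpha)$-good property supplies the quantitative analytic control, and Besicovitch-Federer furnishes the geometric covering machinery that allows local-to-global summation without uncontrolled overlap.
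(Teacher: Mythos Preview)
The paper does not supply a proof of this lemma; it is quoted verbatim as \cite[Theorem~2.2]{exponent} and used as a black box. There is therefore no in-paper argument to compare your proposal against.

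Your sketch is a reasonable high-level outline of the Kleinbock--Margulis quantitative nondivergence machinery, and in that sense it goes further than the paper itself. If you intend it as a self-contained proof, however, the rank-induction step needs tightening: in the actual argument (see \cite{KM}, \cite{KLW}, \cite{exponent}) one does not simply pick a $\Gamma_x$ of \emph{maximal} rank with $\|h(x)\Gamma_x\|<\rho^{rk(\Gamma_x)}$ and hope continuity freezes the rank on a ball. Rather, one builds a flag of marked primitive subgroups at each point and uses a submultiplicativity inequality for covolumes of nested primitive subgroups to propagate smallness from rank $1$ up to the full lattice; this is what forces the hypotheses to live on the dilated ball $\tilde B$ and what produces the precise constant $mC(ND^2)^m$. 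Your displayed local estimate with threshold $\epsilon\,\rho^{rk(\Gamma_x)-1}$ and the claim that ``larger-rank subgroups are rendered harmless by continuity and (ii)'' are placeholders for that mechanism rather than the mechanism itself.
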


\begin{prop}\label{prop: proposition}
Let $X$ be a Besicovitch metric space, $B=B(\x, r)\subset X$, $\mu$ a
measure which is $D-Federer$ on $\tilde{B}=B(\x, 3^{n+1}r)$ for some
$D>0$ and $ \mathbf{f}$ a continuous map from $\tilde{B}$ to $\R^n$.
Given $v\geq n$, let $c_k=\frac{v-n}{kv+n}$ where $1\leq k \leq n$ and assume that
\begin{enumerate}
\item[(i)]
$\exists C,\alpha >0$ such that all the functions   $\x\to
\left\|g_{\mathbf{t}}u_{\mathbf{f}(\x)}\Gamma\right\|$, $\Gamma \subset \Z^{n+1}$ are $(C,
\alpha)$- good on  $\tilde{B}$ with respect to $\mu$
\item[(ii)]
  $\forall k$ $(1\leq k \leq n), \quad \forall \;  d_k>c_k$, $\exists T=T(d_k)>0$ such that for any vector $\mathbf{t} \in \Z_{+}^{n}$ with  $t\geq T$
  and $t_i\geq d_kt$    for at least $k$ values of  $i$ and
 any $\Gamma \subset \Z^{n+1}$,  we have
 \begin{equation}\label{eq: critical}
\left\|g_{\mathbf{t}}u_{\mathbf{f}(\cdot)}\Gamma\right\|_{\mu,
 B}\geq e^{-rk(\Gamma)d_kt}.
 \end{equation}
\end{enumerate}
Then $\omega^\times(\mathbf{f}_*(\mu|_B))\leq v$.
\end{prop}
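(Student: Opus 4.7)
The plan is to prove the proposition by verifying Condition \ref{condition: upperboundot} from Section 2 applied to the pushforward measure $\nu = \mathbf{f}_*(\mu|_B)$. By the definition of pushforward, the sum appearing in that condition equals
\begin{equation*}
\sum_{\substack{\mathbf{t} \in \Z_+^n \\ t_i \geq d_k t \text{ for at least} \\ k \text{ values of } i}} \mu\Big(\Big\{\x \in B : g_{\mathbf{t}} u_{\mathbf{f}(\x)} \Z^{n+1}_k \text{ has a nonzero vector of norm} \leq e^{-d_k t}\Big\}\Big),
\end{equation*}
so my task reduces to bounding the $\mu$-measure of each of these sets and summing.

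The technical heart is the choice of parameters in Lemma \ref{lem: thm2.2}. Given $k$ and $d_k > c_k$, I will pick an intermediate exponent $d_k'$ with $c_k < d_k' < d_k$ and apply Lemma \ref{lem: thm2.2} with $m = n+1$, matrix map $h(\x) = g_{\mathbf{t}} u_{\mathbf{f}(\x)}$, ball $B$, enlarged ball $\tilde B = B(\x, 3^{n+1} r)$, and parameters $\rho = e^{-d_k' t}$, $\epsilon = e^{-d_k t}$. Hypothesis (i) of the proposition directly provides the $(C,\alpha)$-good condition required by the lemma. Applying hypothesis (ii) with the parameter $d_k'$ (legal since $d_k' > c_k$) yields, for every $\mathbf{t}$ with $t \geq T(d_k')$ and with $t_i \geq d_k' t$ for at least $k$ indices (which follows from $t_i \geq d_k t \geq d_k' t$),
\begin{equation*}
\|g_{\mathbf{t}} u_{\mathbf{f}(\cdot)} \Gamma\|_{\mu, B} \geq e^{-rk(\Gamma) d_k' t} = \rho^{rk(\Gamma)},
\end{equation*}
matching the second hypothesis of Lemma \ref{lem: thm2.2}. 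Since $d_k > d_k'$ the condition $\epsilon \leq \rho$ holds, so Lemma \ref{lem: thm2.2} gives
\begin{equation*}
\mu\Big(\Big\{\x \in B : g_{\mathbf{t}} u_{\mathbf{f}(\x)} \Z^{n+1} \notin K_\epsilon \Big\}\Big) \leq (n+1) C (ND^2)^{n+1} e^{-\alpha(d_k - d_k') t} \mu(B).
\end{equation*}
Because $\Z^{n+1}_k \subset \Z^{n+1}$, the event that $g_{\mathbf{t}} u_{\mathbf{f}(\x)} \Z^{n+1}_k$ carries a nonzero vector of norm at most $\epsilon$ is contained in $\{g_{\mathbf{t}} u_{\mathbf{f}(\x)} \Z^{n+1} \notin K_\epsilon\}$, so the same upper bound controls the Condition \ref{condition: upperboundot} integrand.

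To finish, I will sum this estimate over admissible $\mathbf{t}$. The number of $\mathbf{t} \in \Z_+^n$ with $t_1 + \cdots + t_n = t$ is polynomial in $t$ (bounded by $\binom{t+n-1}{n-1}$), so the tail
\begin{equation*}
\sum_{t \geq T(d_k')} t^{n-1} e^{-\alpha(d_k - d_k') t}
\end{equation*}
converges, while the finitely many $\mathbf{t}$ with $t < T(d_k')$ contribute $\leq \mu(B) < \infty$ each. Thus Condition \ref{condition: upperboundot} holds for every $k$ and every $d_k > c_k$, and the sufficient condition from Section 2 yields $\omega^\times(\mathbf{f}_*(\mu|_B)) \leq v$. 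The delicate point is the insertion of the intermediate $d_k'$: without the slack $d_k - d_k' > 0$ one would only obtain $\epsilon = \rho$ and no decay in $t$; with it, the Lemma \ref{lem: thm2.2} bound produces exponential decay that dominates the polynomial count of $\mathbf{t}$'s at each level set of $t$, and everything else is bookkeeping matching the hypotheses between the two frameworks.
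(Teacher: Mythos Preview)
Your proof is correct and follows essentially the same route as the paper's: both verify Condition~\ref{condition: upperboundot} by applying Lemma~\ref{lem:  thm2.2} with $m=n+1$, $h(\x)=g_{\mathbf t}u_{\mathbf f(\x)}$, and an intermediate exponent strictly between $c_k$ and $d_k$ (the paper fixes $d_k'=\tfrac{c_k+d_k}{2}$ while you leave $d_k'$ arbitrary), then sum the resulting exponential bound against the polynomial count of $\mathbf t$'s at each level of~$t$. The containment $\Z^{n+1}_k\subset\Z^{n+1}$ and the observation that $t_i\ge d_kt$ implies $t_i\ge d_k't$ are handled identically.
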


\begin{proof}
Apply Lemma \ref{lem: thm2.2} $n$ times, letting $k$ go from $1$ to $n$. For each iteration set  $m=n+1$
and $\nu=\mathbf{f}_*(\mu|_B)$.

$\forall k$, $\forall d_k>c_k $ and for all
 $\mathbf{t} \in \Z_{+}^n$  satisfying the condition that $ t_i\geq d_kt$ for at least $k$ values of $i$,
set $h_k(\x)=g_{\mathbf{t}}u_{\mathbf{f}(\x)}$.
We see that condition (i) of Lemma \ref{lem: thm2.2}
agrees with condition (i) of Proposition \ref{prop: proposition}.

For the other condition,  set
$\rho_k^t=e^{-\frac{c_k+d_k}{2}t}$ and $\epsilon_k^t=e^{-d_kt}$. Note that
\[d_k > c_k \Leftrightarrow \epsilon_k^t<\rho_k^t.\]
 Also we have
\[\frac{\epsilon_k^t}{\rho_k^t}=e^{-\frac{d_k-c_k}{2}t}.\]
It follows that
condition (ii) of Proposition \ref{prop: proposition} implies condition (ii) of Lemma \ref{lem: thm2.2} for $t>T(\frac{c_k+d_k}{2})$.
Hence by Lemma \ref{lem: thm2.2}, for any fixed $\mathbf{t} \in \Z_{+}^n$ with $t\geq T$ and $ t_i\geq d_kt$ for at least $k$ values of $i$, we  have
\begin{align}\label{id2}
\nu\left(\left\{\y\mid g_{\mathbf{t}}u_{\y}\Z^ {n+1} \notin
K_{e^{-d_kt}}\right\}\right)&=\mu\left(\left\{\x \in B \mid h_k(\x)\Z^ {n+1} \notin
K_{e^{-d_kt}}\right\}\right)\\
&\leq const \cdot e^{-\alpha\frac{d_k-c_k}{2}t}\mu(B).\nonumber
\end{align}
We have the obvious identity
\begin{equation}\label{id1}
\sum_{\mathbf{t} \in \Z_{+}^n}^{\infty}\nu\Big(\left\{\y \mid g_{\mathbf{t}}u_{\y}\Z^ {n+1} \notin
K_{e^{-d_kt}}\right\}\Big)=
\sum_{l=1}^{\infty}\sum_{\mathbf{t} \in \Z_{+}^n
,\; t=l}\nu\Big(\left\{\y \mid g_{\mathbf{t}}u_{\y}\Z^ {n+1} \notin
K_{e^{-d_kl}}\right\}\Big).
\end{equation}
Since for each $l \in \N$, the possible number of $\mathbf{t} \in \Z_{+}^n$ with $t=l$ is bounded from above by
$(l+1)^n$, we  get from \eqref{id1}
\begin{equation}
\sum_{\substack{\mathbf{t} \in \Z_{+}^n,\\ t_i\geq d_kt \text{ for
at least }\\ k \text{ values of }i}}\nu\Big(\left\{\y \mid g_{\mathbf{t}}u_{\y}\Z^ {n+1} \notin
K_{e^{-d_kt}}\right\}\Big)\leq
\sum_{l=1}^{\infty}(l+1)^n const \cdot e^{-\alpha\frac{d_k-c_k}{2}l}\mu(B)<\infty.
\end{equation}
Since $h_k(\x)\Z^ {n+1}_k \subset h_k(\x)\Z^{n+1}$, we have
\begin{align}
&\left\{\x \in B \mid h_k(\x)\Z^{n+1}_k \text{ has at least one  vector with norm} \leq e^{-d_kt}\right\}
 \nonumber \\
& \subset \left\{\x \in B \mid h_k(\x)\Z^ {n+1} \notin
K_{e^{-d_kt}}\right\}.
\end{align}
Moreover we note that the restriction $t_i\geq d_kt$    for at least $k$ values of  $i$ is also present
in Condition \ref{condition: upperboundot}.
We let $k$ range over all integers between 1  and $n$ and Condition \ref{condition: upperboundot} is satisfied.
\end{proof}

\section{Proof of Main Theorems}

To prove the theorems, we first calculate $\|g_{\mathbf{t}}u_{\mathbf{f}(\cdot)}\Gamma\|_{\mu,
 B}$ in \eqref{eq: critical}. The following exterior algebraic computation
comes from \cite{exponent} and \cite{KM}.

Suppose $\R^{n+1}$ has standard basis $
\mathbf{e}_1,\dots,\mathbf{e}_{n+1}$, and if we extend the Euclidean
structure of $\R^{n+1}$ to $\textstyle\bigwedge^j(\R^{n+1})$, then for index sets
\begin{equation}\label{eq: index1}
I=\big\{i_1, i_2,\dots,
i_j\big\}\subset\big\{1,2,\dots, {n+1}\big\}, \quad i_1<i_2<\dots<i_j
\end{equation}
$\left\{\mathbf{e}_I\mid \mathbf{e}_I=\mathbf{e}_{i_1}\wedge
\mathbf{e}_{i_2} \wedge \dots \wedge \mathbf{e}_{i_j}, \quad
\#I=j\right\}$ form an orthogonal basis of
$\bigwedge^j(\R^{n+1})$ when $I$ range over all index sets of the form \eqref{eq: index1}.
If  a discrete subgroup $\Gamma \subset \R^{n+1}$ of rank $j$ is
viewed as a $\Z$-module with basis
$\mathbf{v}_1,\ldots,\mathbf{v}_j$, then we may represent it by  exterior product
$\w=\mathbf{v}_1\wedge\ldots\wedge \mathbf{v}_j$. Observing
$\|\Gamma\|=\|\w\|$,  we will be able to compute
$\left\|g_tu_\mathbf{f}\Gamma\right\|_{\mu,B}$ as in  \eqref{eq: critical}  directly.

We assume from now on that $J$ and $I$
stand for index sets:
$J$ is of order $j-1$ and $I$ is of order $j$.
Given $\y=(y_1,\ldots,y_n)$,
we set  $y_{n+1}=1$ and get $u_\y$ as in \eqref{eq: uofy}. We get
   \begin{align}
   u_\y\mathbf{e}_I&=\mathbf{e}_I,\qquad \textrm{if }n+1 \in I;\nonumber\\
   u_\y\mathbf{e}_I&=\mathbf{e}_I\pm\sum_{i \in I} y_i\mathbf{e}_{I\cup\{n+1\}\setminus\{i\}}\qquad\text{otherwise}.
   \end{align}
Hence
   \begin{equation}\label{eq: uyw}
   u_\y\w=\sum_{\substack{I\subset \{1,\ldots,n\}}}\pm\langle \mathbf{e}_I,\w \rangle \mathbf{e}_I+
   \sum_{\substack{J\subset \{1,\ldots,n\}}}\left(\sum_{i=1}^{n+1}\pm\langle \mathbf{e}_i\wedge \mathbf{e}_J,\w \rangle y_i\right)\mathbf{e}_J \wedge \mathbf{e}_{n+1}.
   \end{equation}
Since $g_\mathbf{t}=\diag\left\{e^{-t_1},\ldots,e^{-t_n},e^{t}\right\}$, we have
\begin{align}
   g_\mathbf{t}\mathbf{e}_i&=e^{-t_i}\mathbf{e}_i\quad  (1\leq i\leq n);\\
   g_\mathbf{t}\mathbf{e}_{n+1}&=e^{t}\mathbf{e}_{n+1};
\end{align}
\begin{align}\label{eq: uyw2}
   g_{\mathbf{t}}u_\y\w=&\sum_{\substack{I}\subset \{1,\ldots,n\}}e^{-\sum_{\substack{i \in I}}t_i}\pm\langle \mathbf{e}_I,\w \rangle \mathbf{e}_I\\
   &+\sum_{\substack{J\subset \{1,\ldots,n\}}}e^{t-\sum_{\substack{i \in J}}t_i}\left(\sum_{i=1}^{n+1}\pm \langle \mathbf{e}_i\wedge \mathbf{e}_J,\w \rangle y_i\right) \mathbf{e}_{J} \wedge \mathbf{e}_{n+1}. \nonumber
\end{align}
For  $\mathbf{f}=(f_1,f_2,\ldots,f_n):\tilde{B} \rightarrow \R^n$ in \eqref{eq: critical}, we set $f_{n+1}=1$ and
\[\widetilde{\mathbf{f}}=(f_1,\ldots,f_n,1).\]
Also set
\begin{equation} \label{eq: cwdefi}
\mathbf{c}(\w)_i=\sum_{\substack{J\subset \{1,\ldots,n\} \\ \# J=j-1}}\pm\langle \mathbf{e}_i\wedge \mathbf{e}_J,\w \rangle\mathbf{e}_J\in \textstyle\bigwedge^{j-1}(\R^{n+1}), \quad 1 \leq i \leq n+1,
\end{equation}
\begin{equation}
  \mathbf{c}(\w)=\left(
                   \begin{array}{c}
                    \mathbf{c}(\w)_1\\
                     \mathbf{c}(\w)_2\\
                     \vdots \\
                     \mathbf{c}(\w)_{n+1} \\
                   \end{array}
                 \right).
  \end{equation}
Noting that
$\mathbf{e}_I$ and $\mathbf{e}_{J} \wedge \mathbf{e}_{n+1}$ appearing in \eqref{eq: uyw2} are orthogonal, we have,
 up to some constant  dependent on $n$ only
\begin{align}\label{eq: compare}
\big\|g_{\mathbf{t}}u_{\mathbf{f}(\cdot)}\w\big\|_{\mu,B}&\asymp\max\left(e^{-\sum_{\substack{i \in I}}t_i}\|\langle \mathbf{e}_I,\w \rangle\|,\quad e^{t-\sum_{\substack{i \in J}}t_i}\left\|\sum_{i=1}^{n+1}\pm\langle \mathbf{e}_i\wedge \mathbf{e}_J,\w \rangle f_i\right\|_{\mu,B} \right)\\
&=\max\left(e^{-\sum_{\substack{i \in I}}t_i}\|\langle \mathbf{e}_I,\w \rangle\|,\quad e^{t-\sum_{\substack{i \in J}}t_i}
\left\|\widetilde{\mathbf{f}}(\cdot  )\mathbf{c}(\w)\right\|_{\mu,B} \right), \nonumber
\end{align}
 where the maximum is taken over all index sets $I\subset \{1,\ldots,n\}$ and $J\subset \{1,\ldots,n\}$.

 Following arguments of \cite{exponent}, we see that
 the  value of $\|g_{\mathbf{t}}u_{\mathbf{f}(\cdot)}\w\|_{\mu,B}$ as in \eqref{eq: compare} is affected
 by the linear dependence relations between
the components of $\widetilde{\mathbf{f}}$.  We denote by $\mathcal{F}_{\mu,B}$ the $\R$-linear span of the restrictions of
$f_1,\ldots,f_n, 1$ to $B\cap \text{ supp } \mu$, denote its dimension by $l+1$, and choose functions $g_1,\ldots,g_l: B\cap \text{ supp } \mu
\rightarrow \R$
 such that $g_1,\ldots,g_l,1$ form a basis of $\mathcal{F}_{\mu,B}$. This choice defines a matrix
 \begin{equation}
 R = (r_{i,j})_{\substack{i=1,\ldots,l+1\\j=1,\ldots,n+1}}\in M_{l+1,n+1}
 \end{equation}
formed by coefficients in the expansion of $f_1,\ldots,f_n,1$  as linear combinations of $g_1,\ldots,g_l,1$.
In other words, with the notation $\widetilde{\mathbf{g}}=(g_1,\ldots,g_l,1)$,  we have
\begin{equation}\label{R}
\widetilde{\mathbf{f}}(\x)=\widetilde{\mathbf{g}}(\x)R, \quad \forall \x \in B\cap \text{ supp } \mu.
\end{equation}
Therefore  $\|\widetilde{\mathbf{f}}(\cdot)\mathbf{c}(\w)\|_{\mu,B}$ can be replaced by
$\left\|\widetilde{\mathbf{g}}(\cdot  )R\mathbf{c}(\w)\right\|_{\mu,B}$ and the latter, in
view of linear independence of the components of $\widetilde{\mathbf{g}}$, simply by the norms of vectors $R\mathbf{c}(\w)$ (up to some
 constant uniform  in  $\w$  yet dependent on $\mathbf{f}$, $\widetilde{\mathbf{g}}$, $\mu$ and $B$). The second assumption of Proposition \ref{prop: proposition} can be rewritten as:

\begin{condition}\label{condition: equi0}
$\forall k$  $(1\leq k \leq n)\quad \forall   d_k>c_k$, $\exists T=T(d_k)>0$ such that for all  $\mathbf{t} \in \Z_{+}^{n}$ with  $t\geq T$
  and $t_i\geq d_kt$    for at least $k$ values of  $i$, we have $\forall j$ $(1\leq j \leq n)$,  $\forall \text{ nonzero }\w \in \textstyle\bigwedge^j(\Z^{n+1})$,
\[\max\left(e^{-\sum_{\substack{i \in I}}t_i}\|\langle \mathbf{e}_I,\w \rangle\|,\quad e^{t-\sum_{\substack{i \in J}}t_i}
\left\|R\mathbf{c}(\w)\right\|\right)\geq e^{-jd_kt},\]
where the maximum is taken over all index sets $I\subset \{1,\ldots,n\}$ with order $j$ and $J\subset \{1,\ldots,n\}$ with order $j-1$. Matrix
$R$ is defined via \eqref{R}.
\end{condition}

\begin{remark}
$k$ and $j$ are independent variables: $k$ arises from Lemma \ref{lem: equiva1}  while $j$ is the rank of $\w$. $R$
depends on  the measure $\mu$, the ball $B$, the map $\mathbf{f}$  as well as the choice of   $\widetilde{\mathbf{g}}$.
\end{remark}

According to \cite{exponent}, the only way the ball $B$, the measure $\mu$ and the map $\f$ enter the above
conditions is via the matrix $R$, which depends on  $B$,  $\mu$ and $\f$ and is
not uniquely determined. However another choice of $R$ would  yield a condition
equivalent to Condition \ref{condition: equi0}. Let $\mu$ be a Federer measure on a Besicovitch metric space $X$ and $\LL$ a hyperplane of $\R^n$. We assume from now on that
 $\f\colon X\to \LL$ is a continuous map  such that  $(\f,\mu)$ is nonplanar in $\LL$. For a subset $M$ of $\R^n$,
 define its affine span  $\langle M\rangle_a$ to be the intersection of all affine subspaces of $\R^n$ containing $M$.
 By definition \cite[\S1]{exponent}, $(\f,\mu)$ is   nonplanar in $\LL$ iff
 \begin{equation}
 \LL=\langle\mathbf{f}(B \cap \text{ supp } \mu)\rangle_a, \quad \forall \text{ open } B  \subset X \text{ with } \mu(B)>0.
\end{equation}
 Suppose
\begin{equation}
\mathbf{h}\colon \R^{n-1} \to \LL=\langle\mathbf{f}(B \cap \text{ supp } \mu)\rangle_a \text{ is an affine isomorphism, and}\nonumber
\end{equation}
\begin{equation}\label{hmap}
\widetilde{\mathbf{h}}(\x)=\widetilde{\mathbf{x}}R, \quad \mathbf{x}\in  \R^{n-1},
\end{equation}
where $\widetilde{\mathbf{h}}=(h_1,\ldots,h_n,1)$ and $\widetilde{\mathbf{x}}=(x_1,\ldots,x_{n-1},1)$. Then $R$ and
$\mathbf{g}=\mathbf{h}^{-1}\circ \mathbf{f}$ satisfy \eqref{R}.  $g_1,\ldots, g_{n-1},1$ generate
$\mathcal{F}_{\mu,B}$ and are linearly independent over $\R$. This way Condition \ref{condition: equi0} or the second assumption of Proposition \ref{prop: proposition} becomes a property of the space $\langle\mathbf{f}(B \cap \text{ supp } \mu)\rangle_a$ or  $\LL$.
We can thus choose $R$ uniformly for all  measures $\mu$,  balls $B$ and maps $\mathbf{f}$.
Since the statement that Condition \ref{condition: equi0} holds for any $R$ satisfying \eqref{R} is equivalent to the statement that it
holds for some  $R$ satisfying \eqref{R}, we will make the most natural choices for  $\LL$  as
described in \eqref{eq: ldefi}: $X=\R^{n-1}$,
$\mu=\lambda$ and  the following  map according to  \eqref{hmap}:
\begin{equation} \label{eq: rdefi1}
   \widetilde{\mathbf{h}}(\x)=(h_1,\ldots,h_n,1)(\x)=(x_1,x_2,\ldots,x_{n-1},1)R_0,
   \end{equation}
   where  $R_0$ is an $n\times(n+1)$ matrix defined by
   \begin{equation}\label{eq: rdefi}
R_0=\left(  \begin{array}{c}
a_1\\ \vdots \\  a_{n-1} \\a_n \\ \end{array}  I_n\right).
\end{equation}
We can replace an arbitrary $R$ in Condition \ref{condition: equi0} by $R_0$ defined in \eqref{eq: rdefi} as long as
$(\f,\mu)$ is nonplanar in $\LL$.

Noting \eqref{eq: compare}  and the fact that $e^{t-\sum_{\substack{i \in J}}t_i}\geq 1$,  we get
\begin{equation}\label{eq: critical1}
   \left\|g_tu_{\mathbf{f}(\cdot)} \w\right\|_{\mu,B} \succ \left\|R_0\mathbf{c}(\w)\right\|,
\end{equation}
where $\succ$ implies some constant dependent on $\mu$, $B$ and $\mathbf{f}$.

Next we restate and reprove \cite[Lemma 4.6]{extremal}.

\begin{lem}
For $R_0$ defined in \eqref{eq: rdefi} and nonzero $\w \in \textstyle\bigwedge^j(\Z^{n+1})$, we have
  \begin{equation}\label{eq: simple}
  \left\|R_0\mathbf{c}(\w)\right\|\geq 1 \text{ if } j>1.
  \end{equation}
  \end{lem}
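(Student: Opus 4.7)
The plan is to argue by contradiction, exploiting two features: the Pl\"ucker coordinates $p_I := \langle \mathbf{e}_I, \w\rangle$ are integers, and the matrix $R_0$ has first column $\mathbf{a}$ with the remaining $n$ columns forming the identity $I_n$. Assuming $\|R_0\mathbf{c}(\w)\|<1$, I would show that every $p_I$ with $\#I=j$ must vanish, forcing $\w=0$ and contradicting the hypothesis.

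The first concrete step is to make the entries of $R_0\mathbf{c}(\w)$ explicit. Row $r$ of $R_0$ is $(a_r,0,\ldots,0,1,0,\ldots,0)$ with the $1$ in column $r+1$, so
\[
(R_0\mathbf{c}(\w))_r \;=\; a_r\mathbf{c}(\w)_1 + \mathbf{c}(\w)_{r+1},\qquad r=1,\ldots,n.
\]
Combining with \eqref{eq: cwdefi}, the coefficient of $\mathbf{e}_J$ (for $J\subset\{1,\ldots,n\}$ with $\#J=j-1$) in $(R_0\mathbf{c}(\w))_r$ has the form $\pm a_r p_{\{1\}\cup J}\pm p_{\{r+1\}\cup J}$, where the $a_r$-term drops out when $1\in J$ and the second summand drops out when $r+1\in J$. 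Under the hypothesis $\|R_0\mathbf{c}(\w)\|<1$, each such coefficient has absolute value strictly less than $1$, so any integer-valued one is forced to be $0$.

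The core of the argument is a two-stage cascade. Step 1 (which is where $j>1$ enters): for every $I\subset\{1,\ldots,n+1\}$ with $1\in I$ and $\#I=j\geq 2$, I pick $s\in I\setminus\{1\}$, taking $s=n+1$ if $n+1\in I$ and an arbitrary element otherwise, and set $J:=I\setminus\{s\}$, $r:=s-1$. Then $J\subset\{1,\ldots,n\}$, $1\in J$, and $r+1=s\notin J$, so the coefficient of $\mathbf{e}_J$ in $(R_0\mathbf{c}(\w))_r$ collapses to $\pm p_I$, forcing $p_I=0$. Step 2: for every $I$ with $1\notin I$ and $\#I=j$, I pick $s\in I$ analogously and set $J:=I\setminus\{s\}$, $r:=s-1$. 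The corresponding coefficient equals $\pm a_r p_{\{1\}\cup J}\pm p_I$, and Step 1 applied to $\{1\}\cup J$ (which contains $1$ and has size $j$) kills the first summand; what remains is $\pm p_I$, again an integer forced to vanish. Together, Steps 1 and 2 show that every Pl\"ucker coordinate of $\w$ is zero, contradicting $\w\neq 0$.

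The argument is essentially combinatorial and I do not foresee a serious obstacle. The two points demanding care are the sign conventions in \eqref{eq: cwdefi}, which are harmless for a vanishing conclusion, and the requirement $J\subset\{1,\ldots,n\}$, which is automatic unless $n+1\in I$ and is enforced in that case by the preferred choice $s=n+1$. The role of the assumption $j>1$ is visible only in Step 1, where $I\setminus\{1\}$ must be nonempty; consistently, the bound fails at $j=1$, since $\w=\mathbf{e}_1$ gives $R_0\mathbf{c}(\w)=\mathbf{a}$, whose norm can be made arbitrarily small.
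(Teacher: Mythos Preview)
Your argument is correct and shares the paper's core idea: isolate a single integer Pl\"ucker coordinate $p_I$ as the $\mathbf{e}_J$-coefficient of some row $a_r\mathbf{c}(\w)_1+\mathbf{c}(\w)_{r+1}$ of $R_0\mathbf{c}(\w)$. The paper argues directly (pick one $I_1$ with $p_{I_1}\neq 0$ and exhibit that coefficient), while you run the same observation as a contradiction with a two-step cascade; in fact your Step~2 makes explicit the case $1\notin I$ that the paper's ``without loss of generality $i_1=1$'' glosses over.
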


\begin{proof}
Suppose for some index set $I_1=\{i_1,i_2,\ldots,i_j\}$ we have  $a=\langle \mathbf{e}_{I_1},\w\rangle\in \Z$ and $a\neq 0$. Since $j>1$, without loss
of generality, we assume that $i_1=1$ and $i_2=2$.
We consider the first entry of $\left\|R_0\mathbf{c}(\w)\right\|=\left\|a_1\mathbf{c}(\w)_1+\mathbf{c}(\w)_2\right\|$ and prove that
$\left\|a_1\mathbf{c}(\w)_1+\mathbf{c}(\w)_2\right\|\geq 1$. Once this is proved the lemma will be established.
Set $J_1=\{2,i_3,,\ldots,i_j\}$. Then $\mathbf{c}(\w)_1$ has no term containing $\mathbf{e}_{J_1}$
because otherwise, by \eqref{eq: cwdefi}  we will have  $1 \in J_1$. In other words, $\mathbf{c}(\w)_1$ only has terms orthogonal to $\mathbf{e}_{J_1}$.
In addition, $\mathbf{c}(\w)_2=\pm a \mathbf{e}_{J_1}+$  terms orthogonal to $\mathbf{e}_{J_1}$. Hence
\begin{equation}
\left\|a_1\mathbf{c}(\w)_1+\mathbf{c}(\w)_2\right\|\geq \|\pm a \mathbf{e}_{J_1}\|=|a|\geq 1.\qedhere
\end{equation}
\end{proof}

Hence  the assumptions of
Proposition \ref{prop: proposition} are automatically fulfilled for
subgroups $\Gamma$ represented by $\w$ as above,  because from \eqref{eq: critical1} and \eqref{eq: simple} we get
\begin{equation}
  \left\|g_tu_{\mathbf{f}(\cdot)} \w\right\|_{\mu,B}\succ 1 \text{ if } j>1,
  \end{equation}
  where $\succ$ implies some constant dependent on $\mu$, $B$ and $\mathbf{f}$.

  Thus the second assumption of Proposition \ref{prop: proposition} or Condition \ref{condition: equi0}
can be rewritten as:

\begin{condition}\label{condition: equi}
  $\forall k$ $(1\leq k\leq n),\; \forall d_k>c_k$, $\exists T=T(d_k)>0 \text { such that for any }t\geq T \text{ with
    }$  $t_i\geq d_kt$   for at least $k$ values of $i$, $\forall$ nonzero
  $\w \in \Z^{n+1}$ , we have
  \begin{equation}\label{eq:  jdkt}
  \max\left(e^{-t_i}\big\|\langle \mathbf{e}_i,\w \rangle\big\|,\quad e^{t}\big\|R_0\mathbf{c}(\w)\big\|
  \right)\geq e^{-d_kt},\quad 1\leq i\leq n,
  \end{equation}
Matrix $R_0$ is defined via \eqref{eq: rdefi}.
  \end{condition}

In summary, when $\f\colon X\to \LL$ is a continuous map  such that  $(\f,\mu)$ is nonplanar in $\LL$, the second assumption   of Proposition \ref{prop: proposition} $\Leftrightarrow$
Condition \ref{condition: equi0} $\Leftrightarrow$
Condition \ref{condition: equi}.
The next lemma gives an account of what happens if the above conditions fail to hold.

\begin{lem}\label{lem: negation2}
 Let $\mu$ be a Federer measure on a ball $B\subset X$, take $v>n$ and $c_k=\frac{v-n}{kv+n}$ $(1\leq k\leq n)$.
 Let $\mathbf{f}$   be a continuous map from $X$ to $\LL$ such
 that   $(\f,\mu)$ is nonplanar in $\LL$ and the second assumption   of Proposition \ref{prop: proposition} does not hold. Then $\f(B\cap \text{ supp }
 \mu)\subset W_u^\times$ for some $u>v$.
 \end{lem}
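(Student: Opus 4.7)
The plan is to unpack the hypothesis that Condition \ref{condition: equi} fails (equivalent, under the nonplanarity of $(\f,\mu)$ in $\LL$, to the second assumption of Proposition \ref{prop: proposition}), extract a nonzero integer vector $\w = (\q,p) \in \Z^{n+1}$ for each sufficiently large $t$ along an unbounded sequence, and then verify that these vectors collectively witness $\omega^\times(\f(\x)) \geq u$ for a single $u > v$, uniformly in $\x \in B \cap \supp\mu$.

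Concretely, negating Condition \ref{condition: equi} yields some $k \in \{1,\ldots,n\}$ and $d_k > c_k$ (necessarily $k d_k < 1$), together with an unbounded family of $\mathbf{t} \in \Z_+^n$, each satisfying $t_i \geq d_k t$ for at least $k$ indices $i$, for which one finds nonzero $\w = (\q,p) \in \Z^{n+1}$ with $|q_i| < e^{t_i - d_k t}$ for all $1 \leq i \leq n$ and $\|R_0 \mathbf{c}(\w)\| < e^{-(1+d_k)t}$. For indices with $t_i < d_k t$ integrality forces $q_i = 0$, so $|q_i|_+ = 1$; for the other (at least $k$) indices Lemma \ref{lem: elementary} yields $|q_i|_+ \leq e^{t_i - d_k t}$. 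Multiplying gives
\[
\Pi_+(\q) \leq e^{(1 - k d_k)t}.
\]
Meanwhile $\widetilde{\f} = \widetilde{\mathbf{g}}\,R_0$ on $B \cap \supp\mu$, and continuity supplies a uniform bound $C$ on $\|\widetilde{\mathbf{g}}\|$, so
\[
|\langle \q, \f(\x)\rangle + p| = |\widetilde{\mathbf{g}}(\x) R_0 \mathbf{c}(\w)| \leq C \|R_0 \mathbf{c}(\w)\| < C\, e^{-(1+d_k)t}.
\]
Picking any $u$ with $v < u < n(1+d_k)/(1-kd_k)$---this interval is nonempty because $x \mapsto n(1+x)/(1-kx)$ is strictly increasing on $(0,1/k)$---a direct comparison of exponents yields $C\,e^{-(1+d_k)t} < \Pi_+(\q)^{-u/n}$ whenever $t$ is sufficiently large.

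It remains to extract infinitely many Diophantine witnesses from this unbounded family. Either (a) the $\w$'s realize infinitely many distinct integer vectors---in which case one checks that for large $t$ one has $\q \neq 0$ (else $\|R_0 \mathbf{c}(\w)\| = |p| \geq 1$, contradicting smallness) and distinct $\w$'s force distinct $\q$'s (else two integer $p$-values would lie within $2C e^{-(1+d_k)t} < 1$), giving infinitely many $\q$ validating the target inequality---or (b) by pigeonhole some $\w_0 \neq 0$ recurs for arbitrarily large $t$, forcing $R_0 \mathbf{c}(\w_0) = 0$ and hence $\langle \q_0, \f(\x)\rangle + p_0 \equiv 0$ on $B \cap \supp\mu$; nonplanarity of $(\f,\mu)$ in the hyperplane $\LL$ promotes this identity to all of $\LL$, after which the integer multiples $m \w_0$ (with $\|m \q_0\| \to \infty$) provide the required infinitely many zero-residue solutions, so $\omega^\times(\f(\x)) = \infty$. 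Either way $\f(\x) \in W^\times_u$, as required. The step I expect to be most delicate is the multiplicative bound $\Pi_+(\q) \leq e^{(1 - k d_k)t}$: a blind estimate only yields $\|\q\| \leq e^t$, far too weak to push $u$ past $v$. It is precisely the restriction \textit{$t_i \geq d_k t$ for at least $k$ indices}---tracked from Lemma \ref{lem: equiva1} through Condition \ref{condition: upperboundot} into Condition \ref{condition: equi}---combined with Lemma \ref{lem: elementary}, that saves the exponent. A secondary subtlety is Case (b), where the nonplanarity must be used in $\LL$ (rather than in $\R^n$) to promote the identity $\langle \q_0,\f\rangle+p_0=0$ to a defining equation of $\LL$.
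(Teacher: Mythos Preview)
Your proof is correct and hinges on the same key estimate as the paper's---that integrality together with the constraint $t_i\ge d_kt$ on at least $k$ indices forces $\Pi_+(\q)\le e^{(1-kd_k)t}$---but the packaging differs. The paper works directly from the negation of assumption~(ii) of Proposition~\ref{prop: proposition}, so the pointwise bound $|\langle\q,\f(\x)\rangle+p|\le e^{-(1+d_k)t}$ comes for free without your constant $C$; it then splits into cases according to whether the number $m$ of nonzero entries of $\q$ satisfies $m\ge k$ (dispatched via the invariant $\gamma_m$ and formula~\eqref{eq: otyequal}) or $m<k$ (handled by the product estimate). Your version treats all $m$ uniformly---since any nonzero $q_i$ necessarily sits among the $\ge k$ ``large'' indices, the product bound always applies---thereby avoiding the case split, at the price of the Cauchy--Schwarz step needed to convert $\|R_0\mathbf c(\w)\|$ back into a linear-form bound. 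You are also more explicit than the paper about the ``infinitely many $\q$'' requirement. One minor note: in your case~(b) the appeal to nonplanarity is superfluous; once $\langle\q_0,\f(\x)\rangle+p_0=0$ holds for each $\x\in B\cap\supp\mu$, the multiples $m\w_0$ already witness $\omega^\times(\f(\x))=\infty$ at every such point, which is all the lemma requires.
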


 \begin{proof}
 If  the second assumption   of Proposition \ref{prop: proposition} or equivalently Condition \ref{condition: equi} does not hold,
 $\exists k$ with $ 1\leq k \leq n$ , a
 sequence $t^j \rightarrow \infty$ and a sequence of nonzero integer vectors $\w^j$  such that for some
 $d_k>c_k$, we have $\forall \x \in B\cap \text{ supp } \mu$
 \begin{multline}
 \left\|g_{\mathbf{t}^j}u_{\mathbf{f}(\x)}\w^j\right\|\leq e^{-d_kt^j}, \text{ where }
t^j=\sum_{i=1}^n t_i^j\text{ and }t_i^j\geq d_kt^j \text{ for at least } k \text{ values of }i.
 \end{multline}
 Equivalently, $\forall \x\in B\cap \text{ supp }  \mu$,  $\exists m$ independent of $k$
  with $1\leq m \leq n$,  such that
 for an infinite subsequence of $j$,
 there exists nonzero vector $v^j$ such that
 \begin{equation}
 \|v^j\|\leq e^{-d_kt^j},\quad  v^j \in g_{\mathbf{t}^j}u_{\mathbf{f}(\x)}\Z^{n+1}_m.
\end{equation}
Recall that by \eqref{eq: decompose}
\begin{equation}
 \Z^{n+1}_{m}=\left\{\left.(\q,p)=(q_1,\ldots,q_n,p)\in
\Z^{n+1}\right|\text{ exactly $m$ entries of }\q \text{ are nonzero}\right\}.
\end{equation}
Consequently
\begin{equation}
\gamma_m(\mathbf{f}(\x)) \geq d_k
\end{equation}
    We get from \eqref{eq: otyequal} that
  \begin{equation}
  \omega^\times(\f(\x))\geq \dfrac{n+nd_k}{1-md_k}
\end{equation}
If $m \geq k$, then because the function $a(x)= \tfrac{n+nd_k}{1-xd_k}$ increases as $x$ increases, we get
\begin{equation}
  \omega^\times(\f(\x))\geq \dfrac{n+nd_k}{1-md_k}  \geq \dfrac{n+nd_k}{1-kd_k}  >
  \dfrac{n+nc_k}{1-kc_k}=v
\end{equation}
If $m< k$, then the above simple arguments do not apply.
 We have, for an infinite sequence $j$, $\exists (\q^j,p^j) \in \Z_{m}^{n+1}$ such that
 \begin{equation} \label{eq: neg}
\max\left(e^{t^j}|\langle\q^j ,\f(\x)\rangle+p^j|,\quad  e^{-t_i^j}|q_i^j| \right) \leq e^{-d_kt^j}, \quad 1 \leq i \leq n.
\end{equation}
 By assumption $t_i^j \geq  d_kt^j$ for at least $k$ values of  $i$.  For any such $i$, we derive from \eqref{eq: neg}
 \begin{equation}
 |q_i^j|\leq e^{t_i^j-d_kt^j },\quad \text{if  } t_i^j \geq  d_kt^j.
 \end{equation}
From Lemma \ref{lem: elementary} we get that
 \begin{equation}
 |q_i^j|_+\leq e^{t_i^j-d_kt^j },\quad \text{if  } t_i^j \geq  d_kt^j.
 \end{equation}
 Define for each $j$ the following two index sets:
 \begin{equation}
 I_1^j=\{i\mid q_i^j\neq0\},\quad I_2^j=\{i\mid  t_i^j \geq  d_kt^j\}.
 \end{equation}
 By definition
 \begin{equation}
 \Pi_+(\q^j)=\Pi_{i\in I_1^j}|q_i^j|=\Pi_{i\in I_1^j}|q_i^j|_+
 \end{equation}
 Obviously $I_1^j \subset I_2^j$ and this is where the assumption $m<k$ plays a role.
  Hence
 \begin{equation}\label{eq: trick1}
 \Pi_+(\q^j)=\Pi_{i\in I_1^j}|q_i^j|_+\leq \Pi_{i\in I_2^j}|q_i^j|_+.
 \end{equation}
 Now we study $\Pi_{i\in I_2^j}|q_i^j|_+$. Denote by $b$ the number of elements in $I_2^j$. Immediately we get $b\geq k$ from the assumption
 of the lemma that $t_i^j\geq t^j$ for at least $k$ values of $i$. Moreover $b\geq k>m$.
 Hence
 \begin{equation}
 \Pi_{i\in I_1^j}|q_i^j|_+\leq e^{t^j-bd_kt^j}.
 \end{equation}
 Elementary algebra  shows that $e^{t^j-bd_kt^j}\leq e^{t^j-kd_kt^j}$.
 Hence
 \begin{equation}
 \Pi_{i\in I_1^j}|q_i^j|_+\leq e^{t^j-kd_kt^j}.
 \end{equation}
 As a result of the above argument, we have
 \begin{equation}\label{eq: trick}
 \Pi_+(\q^j)\leq e^{t^j-kd_kt^j}.
 \end{equation}
In addition, from \eqref{eq: neg} we have
 \begin{equation}\label{eq: notrick}
 |\langle\q^j, \f(\x)\rangle+p^j|\leq e^{-t^j-d_kt^j}.
 \end{equation}
 From \eqref{eq: trick} and \eqref{eq: notrick} we get    $\omega^\times(\f(\x))\geq \tfrac{n+nd_k}{1-kd_k}  >v$.
 Combining the two cases ($m\geq k$ and $m<k$), we see that  $\omega^\times(\f(\x)\geq \tfrac{n+nd_k}{1-kd_k}>v$,
 $\forall  \x \in B\cap  \text{ supp } \mu$, as desired.
\end{proof}

\begin{theorem}\label{thm: anothermain}
Let $\mu$ be a Federer measure on a Besicovitch metric space $X$, $\LL$ a hyperplane of $\R^n$  and let $\f: X\to \LL$ be a
continuous map  such that  $(\f,\mu)$ is good  and  nonplanar in $\LL$. Then
the following statements are equivalent for $v \geq n$:
\begin{enumerate}
\item $\{\x \in\text{ supp }\mu\mid\f(\x) \notin W^\times_u\}$ is
nonempty for any $u>v$;
\item $\omega^\times(\f_*\mu)\leq v$;
\item Condition \ref{condition: equi0} holds for  $R$ satisfying \eqref{R}, or equivalently,
Condition \ref{condition: equi} holds for  $R_0$ satisfying \eqref{eq: rdefi}.
\end{enumerate}
\end{theorem}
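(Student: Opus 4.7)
\smallskip
\noindent\textbf{Proof proposal.} The plan is to establish the cyclic chain $(3) \Rightarrow (2) \Rightarrow (1) \Rightarrow (3)$. The equivalence of Condition \ref{condition: equi0} for a general $R$ satisfying \eqref{R} and Condition \ref{condition: equi} for the uniform $R_0$ of \eqref{eq: rdefi} has already been settled in the discussion preceding the theorem, so I freely pass between them.

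For $(3) \Rightarrow (2)$, the strategy is to cover $X$ by countably many balls $B$ with $\mu$ being $D$-Federer on the dilate $\tilde B=B(\cdot,3^{n+1}r)$ and $(\mathbf{f},\mu)$ good on $\tilde B$; this is possible since $\mu$ is Federer and $(\mathbf{f},\mu)$ is good $\mu$-almost everywhere. On each such $B$ I invoke Proposition \ref{prop: proposition}. Its first hypothesis holds because the components of $g_{\mathbf{t}}u_{\mathbf{f}(\cdot)}\mathbf{w}$, as displayed in \eqref{eq: uyw2}, are scalar linear combinations of $1,f_1,\ldots,f_n$, so the functions $\mathbf{x}\mapsto \|g_{\mathbf{t}}u_{\mathbf{f}(\mathbf{x})}\Gamma\|$ are maxima of finitely many such combinations and inherit the $(C,\alpha)$-good property. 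Its second hypothesis is precisely Condition \ref{condition: equi0}, which is our assumption via the equivalence with Condition \ref{condition: equi}. This yields $\omega^\times(\mathbf{f}_*(\mu|_B))\leq v$ on every $B$, and countable subadditivity gives $\omega^\times(\mathbf{f}_*\mu)\leq v$.

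The implication $(2) \Rightarrow (1)$ is immediate: $\omega^\times(\mathbf{f}_*\mu)\leq v$ forces $\mathbf{f}^{-1}(W^\times_u)$ to be $\mu$-null for every $u>v$, so its complement has positive measure and intersects $\supp\mu$.

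The main work lies in $(1) \Rightarrow (3)$, which I argue contrapositively via Lemma \ref{lem: negation2}. If Condition \ref{condition: equi} fails for $R_0$, there exist $k$, $d_k>c_k$, a sequence $\mathbf{t}^j$ with $t^j\to\infty$ and $t_i^j\geq d_k t^j$ for at least $k$ indices, and nonzero $\mathbf{w}^j\in\Z^{n+1}$ witnessing the failure. For any ball $B$ on which $\mu$ is Federer, Lemma \ref{lem: negation2} produces $\mathbf{f}(B\cap\supp\mu)\subset W^\times_u$ with the common value $u=\tfrac{n+nd_k}{1-kd_k}>v$. To contradict (1) I must upgrade this to $\mathbf{f}(\supp\mu)\subset W^\times_u$. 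The arithmetic witnesses $(\mathbf{q}^j,p^j,\mathbf{t}^j)$ unpacked inside the proof of Lemma \ref{lem: negation2} satisfy the closed-in-$\mathbf{x}$ inequalities $|\langle\mathbf{q}^j,\mathbf{f}(\mathbf{x})\rangle+p^j|\leq e^{-(1+d_k)t^j}$ uniformly on $B\cap\supp\mu$, while the $\mathbf{x}$-independent bound $\Pi_+(\mathbf{q}^j)\leq e^{(1-kd_k)t^j}$ continues to hold. Since Federer points are $\mu$-generic and hence dense in $\supp\mu$, continuity of $\mathbf{f}$ transfers these bounds to an arbitrary $\mathbf{x}_0\in\supp\mu$, and the $(\mathrm{ii})\Rightarrow(\mathrm{i})$ direction of Lemma \ref{lem: equiva1} then gives $\omega^\times(\mathbf{f}(\mathbf{x}_0))\geq u$, yielding $\mathbf{f}(\supp\mu)\subset W^\times_u$ and contradicting (1). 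This continuity-based globalization, needed because Lemma \ref{lem: negation2} is stated ball-locally while hypothesis (1) quantifies over all of $\supp\mu$, is the step I expect to require the most care.
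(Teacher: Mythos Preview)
Your argument is correct and follows the paper's own cyclic scheme $(3)\Rightarrow(2)\Rightarrow(1)\Rightarrow(3)$ essentially verbatim. The one place you diverge is the globalization in $\neg(3)\Rightarrow\neg(1)$: the density/continuity maneuver you flag as needing the most care is actually unnecessary, because Condition \ref{condition: equi} involves only $R_0$, which is determined by $\LL$ and not by any ball, and the Federer hypothesis in the statement of Lemma \ref{lem: negation2} is never invoked in its proof --- so that lemma applies, with the \emph{same} witnesses $(k,d_k,\mathbf{t}^j,\mathbf{w}^j)$ and the \emph{same} $u=\tfrac{n+nd_k}{1-kd_k}$, to any ball around any $\x_0\in\text{supp }\mu$, giving $\mathbf{f}(\x_0)\in W^\times_u$ directly. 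This is precisely what the paper's terse line ``no ball $B$ intersecting $\text{supp }\mu$ satisfies Condition \ref{condition: equi}'' is encoding.
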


\begin{proof}
Suppose the second statement holds. Then the set in the first statement has full measure and is therefore
 nonempty.

If the third statement holds, then since $\mu$ is Federer and $(\f,\mu)$ is good, we can conclude that  $\mu- a.e.$ $x \in X$
has a neighborhood $V$ such that $\mu$ is $(C,\alpha)$-good and $D$-Federer on $V$ for some $C, D,\alpha>0$.  Choose a ball $B=B(x,r)$ with positive measure such that the dilated ball $\tilde{B}=B(x,3^{n+1}r)$ is contained in $V$.
For any $\w$, each of the coordinates of $g_\mathbf{t}u_\f\w$  is expressed as linear combination of $1,f_1,\ldots, f_n$ from \eqref{eq: uyw2}. By applying  an elementary property, see e.g. \cite[Lemma 4.1]{KLW}, that whenever $f_1,\ldots, f_N$  are $(C,\alpha)$-good on a set $V$ with respect to $\mu$,
the function $(f_{1}^2+\ldots+ f_{N}^2)^{1/2}$ is  $(N^{\alpha/2}C,\alpha)$-good on $V$ with respect to $\mu$, we see that the first assumption of Proposition \ref{prop: proposition} is satisfied. The second assumption can be derived from Condition \ref{condition: equi} by previous discussion concerning the nonplanarity in $\LL$.  Hence we can apply Proposition \ref{prop: proposition}
to establish the second statement.

If the third statement fails to hold, then no ball $B$ intersecting  $\text{ supp }\mu$
satisfies Condition \ref{condition: equi}. By Lemma \ref{lem: negation2} $\f(B\cap \text{ supp }\mu) \subset W^\times_u$ for some $u>v$.
This  contradicts the first statement.
\end{proof}

From Theorem \ref{thm: anothermain} we see that $\omega^\times(\LL)\leq\inf\{\omega^\times(\y)\mid \y \in \LL\}$ as the
first statement of the theorem implies the second one.
$\omega^\times(\LL)\geq\inf\{\omega^\times(\y)\mid \y \in \LL\}$
 can be derived from  definition.
$\omega^\times(\LL)$ is inherited by
its nondegenerate submanifolds as  nondegeneracy in $\LL$ implies  nonplanarity  in  $\LL$ by definition.
 Therefore \[\omega^\times(\LL)=\omega^\times(\MM)=\inf\{\omega^\times(\y)\mid \y \in \LL\}=\inf\{\omega^\times(\y)\mid \y \in \MM\}\]
  and Theorem \ref{thm: mainthm} is established.

Besides, Theorem \ref{thm: anothermain} establishes that
\begin{equation}\label{eq: computingsigmal}
\omega^\times(\LL)=\sup\left\{v\mid \text{ Condition }\ref{condition: equi}\text{ does not hold}\right\}.
\end{equation}
For hyperplane $\LL$ defined in Theorem  \ref{thm: oml}, we embed it into $\R^{n+1}$ as \ref{eq: rdefi1} by
\begin{equation}\label{eq: fdefi}
\widetilde{\f}(\x)=(a_1x_1+\ldots+a_{n-1}x_{n-1}+a_n,x_1,\ldots,x_{n-1},1).
\end{equation}
Now we prove Theorem \ref{thm: oml}. Without loss of generality, we suppose from now on that
 \begin{equation}\label{arestrict}
a_1a_2\ldots a_{s-1}\neq 0, \quad \text{ and } a_i=0 \text{  for  }  s\leq i\leq n-1.
\end{equation}
Suppose $\w=(p_1,\ldots,p_n,p_0) \in \Z^{n+1}$, then since $j=1$ the index set $J$ with order $1-1=0$  becomes empty and $\bigwedge^{j-1}(\R^{n+1})\in \Z$, we have
 \begin{equation}
 \mathbf{c}(\w)_i=\pm\left\langle \mathbf{e}_i,\w \right\rangle=\pm p_i\;(1\leq i\leq n),\quad \mathbf{c}(\w)_{n+1}=\pm\left\langle \mathbf{e}_{n+1},\w \right\rangle=\pm p_0.
 \end{equation}
 We can change the signs of $p_i$, so we will just use + instead of $\pm$ from now on. Note that
 \begin{equation}
 \mathbf{c}(\w)=\left(  \begin{array}{c}
 p_1\\ p_2\\ \vdots \\ p_n \\  p_{0}\\ \end{array}
 \right)
 \end{equation}
 and
 \begin{equation}
 R_0\mathbf{c}(\w)=\left(  \begin{array}{c}
a_1\\a_2\\ \vdots \\  a_{n-1} \\a_n \\ \end{array}  I_n\right)\left(  \begin{array}{c}
 p_1\\ p_2\\ \vdots \\ p_n \\  p_{0}\\ \end{array}
 \right).
 \end{equation}
 Therefore
\begin{equation}\label{eq: rcj}
 \big\|R_0\mathbf{c}(\w)\big\|=\left\|  \begin{array}{c}
 a_1p_1+p_2\\ \vdots \\a_{s-1}p_1+p_s\\a_{s}p_1+p_{s+1}\\ \vdots \\ a_{n-1}p_1+p_n \\  a_np_1+p_{0}\\ \end{array}
 \right\|.
 \end{equation}
 Unless  $p_{s+1}=\ldots=p_{n}=0$ and $p_1\ldots p_s \neq 0$, $\big\|R_0\mathbf{c}(\w)\big\|\geq \epsilon$  for some positive fixed number $\epsilon$
 whenever $\w$ is nonzero.
 In other words  the second assumption of Proposition \ref{prop: proposition} is always satisfied except for
 $\w \in \Z_s^{n+1}$.
The above observations coupled with Proposition \ref{prop: proposition} supply a useful tool for establishing upper
 bounds of multiplicative exponents of hyperplanes described in \eqref{eq: fdefi}.
 Proof of Theorem \ref{thm: oml} is based on \eqref{eq: computingsigmal}:

 \begin{proof}[Proof of Theorem \ref{thm: oml}]
We employ the method of proof of Lemma \ref{lem: negation2}.

If Condition \ref{condition: equi} does not hold,  $  \exists k$ $(1\leq k\leq n$, $k$ independent of $s$) such that for some $d_k>c_k$,
  $\exists$ an unbounded sequence of $t$ with  $t_i \geq d_kt \text{ for at least }k\text{ values of }i$ and a sequence of  $ \w \in \Z^{n+1}_s$,
    one has
\begin{equation}\label{eq: jdkt3}
 \max\left(e^{-t_i}|p_i|,\quad e^t\left\|R_0\mathbf{c}(\w)\right\|\right)\leq e^{-d_kt},\quad 1\leq i\leq n
 \end{equation}
 $\|R_0\mathbf{c}(\w)\|$ is defined in \eqref{eq: rcj}.
 After reordering, we may assume that $t_i \geq d_kt$ when $1\leq i \leq k$.
 Consequently we have
 \begin{equation}
 |p_i|_+ \leq e^{t_i-d_kt},\qquad 1\leq i \leq k,
 \end{equation}
\begin{equation}
\Pi_+(\mathbf{p})\leq |p_1|_+\ldots |p_k|_+< e^{t-kd_kt},
\end{equation}
\begin{equation}
\left\|R_0\mathbf{c}(\w)\right\|\leq e^{t-d_kt}.
\end{equation}
Hence for some $u>v$
\begin{equation}
\left\|R_0\mathbf{c}(\w)\right\|<\Pi_+(\mathbf{p})^{-u/n}.
\end{equation}
Note that on the other hand by our assumption
 \begin{equation}
\Pi_+(\mathbf{p})=|p_1p_2\ldots p_s|.
 \end{equation}
Hence \eqref{eq: jdkt3} is equivalent to: $\exists$ an infinite
  sequence of
  $(p_1,p_2,\ldots,p_s,p_0) \in \Z^{s+1}_s$  such that for some $u>v$ we have
 \begin{equation}\label{eq: jdkt4}
 \left\|  \begin{array}{c}
 a_1p_1+p_2\\a_2p_1+p_3 \\\vdots\\ a_{s-1}p_1+p_s\\a_np_1+p_0\\ \end{array}  \right\|
  < |p_1p_2\ldots p_s|^{-u/n}.
 \end{equation}
 By assuming $\|p_{i+1}+a_{i}p_1 \|\leq 1$ for $1\leq i\leq s-1$, we deduce that
 $|p_i|\asymp  |p_1|$ for  $1 \leq i \leq s$.
Thus \eqref{eq: jdkt4} is equivalent to : $\exists$ a
  sequence of $(p_1,p_2,\ldots,p_s,p_0) \in \Z^{s+1}_s$ with $|p_1|$ unbounded such that for some $u>v$
 \begin{equation} \label{eq: jdkt5}
 \left\|  \begin{array}{c}
 a_1p_1+p_2\\ a_2p_1+p_3 \\ \vdots \\a_{s-1}p_1+ p_{s} \\ a_np_1+p_0
 \end{array}  \right\|
  \prec|p_1|^{-su/n},
 \end{equation}
 where $\prec$ implies some constant dependent on $\mathbf{a}$.

 According to \eqref{eq: sigma}, $\sigma(\mathbf{a})$ is exactly $\tfrac{s}{n}\sup\left\{v\mid \eqref{eq: jdkt5}\text{ holds}\right\}$.
Therefore by \eqref{eq: computingsigmal}  $\omega^\times(\LL)=\max\left(n,\dfrac{n}{s}\sigma(\mathbf{a})\right)$.
 Theorem \ref{thm: oml} is proved.
 \end{proof}

\section{A Special Case}

We consider a special class of  hyperplanes whose multiplicative Diophantine exponents can be obtained in an
elementary manner:

\begin{theorem}
Let $\LL$ be a hyperplane in $\R^n$ parameterized by
\begin{equation}
\LL=\left\{(x_1,x_2,\ldots,x_{n-1},a)\left|(x_1,\ldots,x_{n-1})\in \R^{n-1}\right.\right\}
\end{equation}
Then $\omega^\times(\LL)=n\sigma(a)$.
\end{theorem}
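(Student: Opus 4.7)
The plan is to prove $\omega^\times(\LL) = n\sigma(a)$ by matching elementary lower and upper bounds, exploiting the product structure $\LL = \R^{n-1} \times \{a\}$ and the factorization $\Pi_+(\q) = |q_n|_+ \Pi_+(\mathbf{q}')$, where $\mathbf{q}' = (q_1, \ldots, q_{n-1})$. As a sanity check, permuting the first and last coordinates puts $\LL$ into the form \eqref{eq: ldefi} with $\mathbf{a} = (0, \ldots, 0, a)$ and $s = 1$, so Theorem \ref{thm: oml}, combined with the one-dimensional Dirichlet bound $\sigma(a) \geq 1$, recovers the same formula; but this section promises an elementary treatment, so I bypass the machinery.

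\textbf{Lower bound.} Test any $\y = (x_1, \ldots, x_{n-1}, a) \in \LL$ with purely vertical probes $\q = (0, \ldots, 0, q)$, $q \in \Z \setminus \{0\}$. Then $\Pi_+(\q) = |q|_+$ and $\langle\q, \y\rangle + p = qa + p$, so \eqref{eq: sigma} produces, for each $u < \sigma(a)$, infinitely many pairs $(q, p)$ with $|qa + p| < |q|^{-u}$. Comparing with \eqref{omegatimes} with $v = nu$ gives $\omega^\times(\y) \geq nu$ for every $u < \sigma(a)$, hence $\omega^\times(\y) \geq n\sigma(a)$ pointwise on $\LL$, and thus $\omega^\times(\LL) \geq n\sigma(a)$.

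\textbf{Upper bound.} Assume $\sigma(a) < \infty$ (otherwise the rational case trivializes the claim). Fix $v > n\sigma(a)$ and set
\[
A_{\q, p} := \{\mathbf{x} \in [0, 1]^{n-1} : |\langle\mathbf{q}', \mathbf{x}\rangle + q_n a + p| < \Pi_+(\q)^{-v/n}\}.
\]
If $\mathbf{q}' = 0$, the inequality degenerates to $|q_n a + p| < |q_n|^{-v/n}$ with $v/n > \sigma(a)$; by \eqref{eq: sigma} only finitely many $(q_n, p)$ satisfy this. If $\mathbf{q}' \neq 0$, choose any $q_i \neq 0$ and apply Fubini in $x_i$: for all sufficiently large $\q$ we have $\Pi_+(\q)^{-v/n} < 1/2$, which forces the $A_{\q, p}$ to be pairwise disjoint in $p$ (subtracting the two defining inequalities forces $|p_1 - p_2| < 1$), and a standard strip estimate gives $\sum_p \mathrm{meas}(A_{\q, p}) \leq 2 \Pi_+(\q)^{-v/n}$. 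Summing,
\[
\sum_{\q:\,\mathbf{q}' \neq 0}\sum_p \mathrm{meas}(A_{\q, p}) \leq 2\Big(\sum_{q_n \in \Z} |q_n|_+^{-v/n}\Big)\Big(\sum_{\mathbf{q}' \in \Z^{n-1}\setminus\{0\}} \Pi_+(\mathbf{q}')^{-v/n}\Big) < \infty,
\]
since both factors are bounded by $\zeta(v/n)$-type products, convergent whenever $v/n > 1$; this holds because $v > n\sigma(a) \geq n$. Borel-Cantelli then yields $\omega^\times(\y) \leq v$ for Lebesgue-a.e.\ $\mathbf{x}$, and sending $v \downarrow n\sigma(a)$ gives $\omega^\times(\LL) \leq n\sigma(a)$.

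The argument is essentially bookkeeping, so there is no single hard step. The one point requiring attention is the strip estimate: observing that the sets $A_{\q, p}$ are disjoint in $p$ and that integrating over any coordinate with $q_i \neq 0$ yields a contribution of only $2\Pi_+(\q)^{-v/n}$ (independent of $|q_i|$) is what causes the $p$-sum to collapse to a single factor and the $\q$-sum to factor into two convergent $\zeta$-type series above the critical threshold $v = n$; the binding constraint $v > n\sigma(a)$ comes entirely from the vertical case $\mathbf{q}' = 0$.
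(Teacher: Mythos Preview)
Your proof is correct. The lower bound matches the paper's verbatim. For the upper bound you take a different route: the paper uses the trivial comparison $\Pi_+(\q)\ge\|\q\|$ to deduce $\omega^\times(\y)\le n\,\omega(\y)$ for every $\y$, and then invokes Jarn\'ik's result \cite{Jarnik} that $\omega(\y)=\sigma(a)$ for Lebesgue-almost every $\y\in\LL$. Your argument instead establishes $\omega^\times(\y)\le n\sigma(a)$ a.e.\ directly, via a first-principles Borel--Cantelli computation that splits on whether $(q_1,\dots,q_{n-1})$ vanishes. The advantage of your route is that it is entirely self-contained---no external transference theorem is needed---while the paper's route is terser because it outsources the work to Jarn\'ik. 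One cosmetic point: the constant $2$ in your strip bound $\sum_p\mathrm{meas}(A_{\q,p})\le 2\,\Pi_+(\q)^{-v/n}$ is slightly optimistic (the honest Fubini estimate in the $x_i$-direction gives $2\epsilon(|q_i|+1)/|q_i|\le 4\epsilon$), but this is immaterial to the convergence of the series.
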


This is a special case of Theorem \ref{thm: oml} with $s=1$ and $a_i=0$ for $1\leq i \leq n-1$.

\begin{proof}
For an arbitrary $\y=(x_1,x_2,\ldots,x_{n-1},a) \in \LL$, if we approximate it by $\q \in \Z^n$ of the special form
$(0,\ldots,0,q_n)$, we see from \eqref{omegatimes} that
\begin{equation}\label{eq: lgreater}
\ot\geq n\sigma(a),\quad \forall \y \in \LL.
\end{equation}
Hence $\omega^\times(\LL)\geq n\sigma(a)$. We proceed to prove that $\omega^\times(\LL)\leq n\sigma(a)$.
Apparently,
\begin{equation}
 \Pi_+(\q) \geq \|\q\|,\quad \forall \q \in \Z^n,
\end{equation}
hence from \eqref{eq: omega} and \eqref{omegatimes} we get
\begin{equation}
\ot\leq n\omega(\y)\quad \forall \y \in \R^n.
\end{equation}
On the other hand it is known from \cite{Jarnik} that  $\omega(\y)=\sigma(a),\; a.e.\;\y \in \LL$.
Hence
\begin{equation}\label{eq: lsmaller}
\ot\leq n\omega(\y)=n\sigma(a),\; a.e.\;\y \in \LL
\end{equation}
Combining \eqref{eq: lgreater} and \eqref{eq: lsmaller} we have $\omega^\times(\LL)=n\sigma(a)$.
\end{proof}

\begin{remark}
It is still an open question whether Theorem \ref{thm: mainthm} holds for subspaces of  codimension bigger than $1$. 
\end{remark}

\end{document}